\newcommand{\tb}[1]{\textbf{#1}}
\renewcommand{\it}[1]{$\mathit{#1}$}
\newcommand{\pc}[1]{\left[ #1 \right]}
\newcommand{\pl}[1]{\left\{ #1 \right \}}
\newtheorem{theorem}{Theorem}[section]
\newtheorem{definition}[theorem]{Definition}
\newtheorem{lemma}[theorem]{Lemma}
\newtheorem{remark}[theorem]{Remark}
\newtheorem{summary}[theorem]{Summary}
\newenvironment{proof}[1][Proof]{\noindent \emph{#1.} }{\hfill \ 
\rule{0.5em}{0.5em}}
\makeatletter\@addtoreset{equation}{section}\makeatother
\makeatletter\@addtoreset{figure}{section}\makeatother
\makeatletter\@addtoreset{table}{section}\makeatother
\begin{document}
\title{Tucker tensor method for fast grid-based summation of 
long-range potentials on 3D lattices with defects
}

\author{Venera Khoromskaia\thanks{Max-Planck-Institute for
        Mathematics in the Sciences, Inselstr.~22-26, D-04103 Leipzig,
        Germany ({\tt vekh@mis.mpg.de}).} \and
        Boris N. Khoromskij\thanks{Max-Planck-Institute for
        Mathematics in the Sciences, Inselstr.~22-26, D-04103 Leipzig,
        Germany ({\tt bokh@mis.mpg.de}).}
        }
 
\date{}

\maketitle

\begin{abstract}
In this paper, we present a method for fast summation
of long-range potentials on 3D lattices with multiple defects and having non-rectangular geometries,
based on rank-structured tensor representations.
This is a significant generalization of our recent technique for the grid-based
summation of electrostatic potentials on the rectangular $L\times L \times L$ lattices
by using the canonical tensor decompositions and yielding the $O(L)$ computational complexity
instead of $O(L^3)$ by traditional approaches.
The resulting  lattice sum is calculated as a Tucker 
or canonical representation whose directional vectors are assembled by the 1D summation of the 
generating vectors for the shifted reference tensor, once precomputed on large 
$N\times N \times N$ representation grid in a 3D bounding box.  The tensor numerical treatment of defects
is performed in an algebraic way by simple summation of tensors in the canonical or Tucker formats. 
To diminish the considerable increase in the tensor rank of the resulting potential sum
the $\varepsilon$-rank reduction procedure is applied based on the 
generalized reduced higher-order SVD scheme.
For the reduced higher-order SVD approximation to a sum of canonical/Tucker tensors,
we prove the stable error bounds in the relative norm  in terms of discarded 
singular values of the side matrices.
The required storage scales linearly in the 1D grid-size, 
$O(N)$, while the numerical cost is estimated by $O(N L)$. 
The approach applies to a general class of kernel functions  including those for
the Newton, Slater, Yukawa, Lennard-Jones, and dipole-dipole interactions.
Numerical tests confirm the efficiency of the presented tensor summation method:
we demonstrate that a sum of millions of Newton kernels on a 3D lattice with 
defects/impurities can be computed in seconds in Matlab implementation.
The tensor approach is advantageous in further functional calculus
with the lattice potential sums represented on a 3D grid, like integration or differentiation,
using tensor arithmetics of 1D complexity.
\end{abstract}

\noindent\emph{AMS Subject Classification:}\textit{ } 65F30, 65F50, 65N35, 65F10

\noindent\emph{Key words:}  Lattice sums, canonical and Tucker tensor formats,
tensor numerical methods, reduced higher-order SVD,
defected lattice in a box,  long-range interaction potentials,
electronic structure calculations.

\section{Introduction}\label{sec:introduct}

Efficient methods for computation of a sum of classical long-range interaction potentials 
on a 3D lattice, or for generally distributed potentials in a volume is one of the challenges in 
the numerical treatment of many-body systems in molecular dynamics,  quantum chemical computations,
simulations of proteins and large solvated biological systems 
\cite{CRYSCOR:12,DesHolm:98,SaDoWeKu:14} and in stochastic computations
\cite{DolKhLitMat:14}.
Mathematical aspects of the problems arising in modeling of periodic and quasi-periodic systems
have been considered in \cite{CanLe:2013,CanEhrMad:2012,LSJ:2010,OrtnEhrl:13,LuOrtnerVK:2013}.  
Beginning with the widely spread Ewald summation techniques \cite{Ewald:27},
the development of lattice-sum methods has led to well established 
algorithms for numerical evaluation of long-range interaction potentials of 
large multiparticle systems, see for example 
\cite{DYP:93,KuScuser:04,PolGlo:96,TouBo_Ewald:96,Hune_Ewald:99} and references therein. 
These methods usually combine the original Ewald summation approach 
with the fast Fourier transform (FFT) or fast multipole method \cite{RochGreen:87}. 
The fast multipole method is well suited for summation of non-uniformly distributed  
potentials, making benefits from direct approximation of closely positioned source functions
and clustered summation of far fields.
The numerical complexity of the Ewald-type computational schemes scales at least linearly 
in the total number of potentials, $O(L^3)$, distributed on the $L\times L \times L$ lattice.

In \cite{VeBoKh:Ewald:14} the new generation of grid-based lattice summation techniques
for long-range interaction potentials on rectangular lattices is introduced,
which is based  on the idea of assembling the directional vectors in the 
low-rank canonical tensor format. 
This tensor approach provides the efficient summation of a large number of
potentials on a 3D lattice with complexity scaling $O(L)$ instead of $O(L^3)$. 

This paper presents a significant generalization of the previous approach \cite{VeBoKh:Ewald:14}
to the case of 3D lattices with defects, such as 
vacancies, impurities and non-rectangular geometries of lattice points, 
as well as in the case of hexagonal symmetries.
Here both the Tucker and canonical tensor formats are employed.
The single potential function in 3D, sampled on a large $N\times N \times N$ representation grid 
in a bounding box, is approximated  with a guaranteed precision
by a low-rank  Tucker/canonical reference tensor. This tensor provides the values
of the discretized potential at any point of this fine auxiliary 3D grid,
but needs only $O(N)$ storage. 
Then each 3D singular kernel function involved in the summation is represented 
on the same grid by a shift of the reference tensor along lattice vector.
Directional vectors of the Tucker/canonical tensor defining a full lattice sum  
are assembled by the 1D summation of the corresponding skeleton vectors
for the shifted tensor. 
In the case of 3D cubic $L\times L \times L$ lattice
the separation ranks of the resultant sum are proven to be the same as for the reference tensor. 
The required storage scales linearly in the 1D grid-size, 
$O(N)$, while the numerical cost is estimated by $O(N L)$.
The lattice nodes are not required to exactly
coincide with the grid points of the global $N\times N\times N$ representation grid 
since the accuracy of the resulting tensor sum
is well controlled due to easy availability of large grid size $N$.

The low-rank tensor approximation to the spherically symmetric reference potential 
is based on the separable representation of the analytic kernel function
by using its integral Laplace transform. 
In particular, the algorithm in \cite{BeHaKh:08} based on the ${sinc}$-quadrature approximation 
to the Laplace transform of the Newton kernel function $\frac{1}{r}$
(see \cite{Braess:95,HaKhtens:04I,GaHaKh3:05}) is applied. 
Literature surveys on the most commonly used in computational practice
tensor formats like canonical, Tucker and matrix product states (or tensor train) representations, 
as well as on basics of multilinear algebra and the recent tensor numerical methods 
for solving PDEs, can be found in \cite{Kolda,Scholl:11,Osel_TT:11,GraKresTo:13,KhorSurv:14,HaSchneid_Surv:15}
(see also \cite{VeKh_Diss:10} and \cite{Dolg_PhD:14}).

In the case of defected lattices, the overall potential is obtained as  an algebraic sum  
of several tensors, each of which represents the contribution of certain cluster of individual
defects that leads to increase in the tensor rank of the resulting potential sum.
For rank reduction in the canonical format the canonical-to-Tucker decomposition 
is applied based on the reduced higher-order SVD (HOSVD) approximation introduced in \cite{KhKh3:08}.
Here we generalize the reduced HOSVD (RHOSVD) approximation to the cases of 
Tucker input tensors\footnote{See \cite{DMV-SIAM2:00} concerning the notion of the initial HOSVD scheme.}. 
We formulate stability conditions and prove the error bounds for the RHOSVD approximation 
to a sum of canonical/Tucker tensors.
In particular, the RHOSVD scheme was successfully applied to the \it{direct} summation 
of electrostatic potentials of nuclei in a molecule \cite{KhorVBAndrae:11} 
for calculation of the one-electron integrals in the framework of
3D grid-based Hartree-Fock solver by tensor-structured methods \cite{VKH_solver:13}.
In general, the direct summation of canonical/Tucker tensors accomplished by
the RHOSVD-type rank reduction proves to be efficient in the case of rather arbitrary
positions of a moderate number of potentials (like nuclei in a single molecule).

Thus, the  canonical/Tucker tensor representation of the lattice sum of 
interaction potentials in the presence of defects can be computed with high accuracy, 
and in a completely algebraic way. 
The tensor approach is advantageous in further functional calculus
with the lattice potential sums represented on a 3D grid, like integration or differentiation,
using tensor arithmetics of 1D complexity \cite{KhKh3:08,VeKh_Diss:10}. 
Notice that the summation cost in the Tucker/canonical formats,
$O(L \, N)$, can be reduced to the logarithmic scale in the grid size, $O(L \log N )$, 
by using the low-rank quantized tensor approximation (QTT), see \cite{KhQuant:09},
of long canonical/Tucker vectors as it was suggested and analyzed in \cite{VeBoKh:Ewald:14}.

The presented approach yields enormous reduction in storage and computing time.
Our numerical tests show that summation of two millions of potentials on a 
3D lattice on a grid of size $10^{15}$ takes about 15 seconds in Matlab implementation.
Generally, this concept originates from numerical studies in \cite{KhKh:06,VeKh_Diss:10}
which displayed  that the Tucker tensor rank of the 3D lattice sum of discretized 
Slater functions is close to the rank of a single Slater potential.
The approach applies to a general class of kernel functions  including those for
the Newton, Slater, Yukawa, Lennard-Jones, and dipole-dipole interactions.
It is can be efficient for calculation of electronic properties of large finite 
crystalline systems like quantum dots, which are 
intermediate between bulk (periodic) systems and discrete molecules.



The rest of the paper is structured as following.
\S2 discusses the 3D grid-based rank-structured canonical/Tucker tensor representations 
to a single kernel based on the approximation properties of tensor decompositions
to a class of spherically symmetric analytic functions. Section \S3 describes the 
direct tensor calculation of a sum of the shifted single potentials and  
focuses on the construction and analysis of the algorithms of
assembled Tucker tensor summation of the non-local potentials on a rectangular 3D lattice.
\S4 describes the Tucker/canonical summation method for lattices with defects
and different geometries. In this case,
the rank optimization is discussed, and the error bound for the generalized RHOSVD approximation
in the Tucker format is proved.
In particular, \S4.3 outlines the extension of the tensor-based lattice
summation techniques  to the class of non-rectangular lattices or rather general shape of 
the set of active lattice points (say, multilevel step-type boundaries).
Conclusions summarize the main features of the approach and outlines
the further perspectives.

\section{Tensor decomposition for analytic potentials} 
\label{sec:NewtTens}

Methods of separable approximation to the 3D Newton kernel (electrostatic potential) 
using the Gaussian sums have been addressed in the chemical and mathematical literature 
since \cite{Boys:56} and \cite{Braess:BookApTh,Braess:95,HaKhtens:04I,GaHaKh3:05,BeHaKh:08}, 
respectively. For the readers convenience, in this section, we recall the main 
ingredients of the tensor approximation scheme for classical potentials.

\subsection{Grid-based canonical/Tucker representation of a single kernel} 
\label{ssec:CoulombUnit}

We discuss the grid-based method for the low-rank canonical and Tucker tensor 
representations of a spherically symmetric kernel function $p(\|x\|)$, 
$x\in \mathbb{R}^d$ for $d=1,2,3$
(for example, for the 3D Newton we have $p(\|x\|)=\frac{1}{\|x\|}$, $x\in \mathbb{R}^3$)
by its projection onto the set
of piecewise constant basis functions, see \cite{BeHaKh:08} for more details.


In the computational domain  $\Omega=[-b/2,b/2]^3$, 
let us introduce the uniform $n \times n \times n$ rectangular Cartesian grid $\Omega_{n}$
with the mesh size $h=b/n$.
Let $\{ \psi_\textbf{i}\}$ be a set of tensor-product piecewise constant basis functions,
$  \psi_\textbf{i}(\textbf{x})=\prod_{\ell=1}^d \psi_{i_\ell}^{(\ell)}(x_\ell)$,
for the $3$-tuple index $\tb{i}=(i_1,i_2,i_3)$, $i_\ell \in \pl{1,...,n}$, $\ell=1,\, 2,\, 3 $.
The kernel $p(\|x\|)$ can be discretized by its projection onto the basis set $\{ \psi_\textbf{i}\}$
in the form of a third order tensor of size $n\times n \times n$, defined pointwise as
\begin{eqnarray}
\mathbf{P}:=\pc{p_\tb{i}} \in \mathbb{R}^{n\times n \times n},  \quad
 p_\tb{i} = 
\int_{\mathbb{R}^3} {\psi_{\tb{i}}({x})}p({\|{x}\|}) \,\, \mathrm{d}{x}.
  \label{galten}
\end{eqnarray}

The low-rank canonical decomposition of the $3$rd order tensor $\mathbf{P}$ is based 
on using exponentially convergent 
$\operatorname*{sinc}$-quadratures for approximation of the Laplace-Gauss transform 
to the analytic function $p(z)$ specified by certain weight $a(t) >0$,
\begin{align} \label{eqn:laplace} 
p(z)=\int_{\mathbb{R}_+} a(t) e^{- t^2 z^2} \,\mathrm{d}t \approx
\sum_{k=-M}^{M} a_k e^{- t_k^2 z^2} \quad \mbox{for} \quad |z| > 0,
\end{align} 
where the quadrature points and weights are given by 
\begin{equation} \label{eqn:hM}
t_k=k \mathfrak{h}_M , \quad a_k=a(t_k) \mathfrak{h}_M, \quad 
\mathfrak{h}_M=C_0 \log(M)/M , \quad C_0>0.
\end{equation}
Under the assumption $0< a \leq \|z \|  < \infty$
this quadrature can be proven to provide the exponential convergence rate in $M$
for a class of analytic functions $p(z)$, see \cite{Stenger,HaKhtens:04I,Khor_CVS:07}. 
For example, in the particular case $p(z)=1/z$,
which can be adapted to the Newton kernel by substitution $z=\sqrt{x_1^2 + x_2^2  + x_3^2}$,
we apply the Laplace-Gauss transform
\[
 \frac{1}{z}= \frac{2}{\sqrt{\pi}}\int_{\mathbb{R}_+} e^{- t^2 z^2 } dt.
\]
We proceed with further discussion of this issue in \S\ref{ssec:GeneraKern}. 

Now for any fixed $x=(x_1,x_2,x_3)\in \mathbb{R}^3$, 
such that $\|{x}\| > 0$, 
we apply the $\operatorname*{sinc}$-quadrature approximation to obtain the separable 
expansion
\begin{equation} \label{eqn:sinc_Newt}
 p({\|{x}\|}) =   \int_{\mathbb{R}_+} a(t)
e^{- t^2\|{x}\|^2} \,\mathrm{d}t  \approx 
\sum_{k=-M}^{M} a_k e^{- t_k^2\|{x}\|^2}= 
\sum_{k=-M}^{M} a_k  \prod_{\ell=1}^3 e^{-t_k^2 x_\ell^2}.
\end{equation}
Under the assumption $0< a \leq \|{x}\| \leq A < \infty$
this approximation provides the exponential convergence rate in $M$,
\begin{equation} \label{sinc_conv}
\left|p({\|{x}\|}) - \sum_{k=-M}^{M} a_k e^{- t_k^2\|{x}\|^2} \right|  
\le \frac{C}{a}\, \displaystyle{e}^{-\beta \sqrt{M}},  
\quad \text{with some} \ C,\beta >0.
\end{equation}
Combining \eqref{galten} and \eqref{eqn:sinc_Newt}, and taking into account the 
separability of the Gaussian basis functions, we arrive at the low-rank 
approximation to each entry of the tensor $\mathbf{P}$,
\begin{equation*} \label{eqn:C_nD_0}
 p_\tb{i} \approx \sum_{k=-M}^{M} a_k   \int_{\mathbb{R}^3}
 \psi_\tb{i}(\textbf{x}) e^{- t_k^2\|{x}\|^2} \mathrm{d} {x}
=  \sum_{k=-M}^{M} a_k  \prod_{\ell=1}^{3}  \int_{\mathbb{R}}
\psi^{(\ell)}_{i_\ell}(x_\ell) e^{- t_k^2 x^2_\ell } \mathrm{d}x_\ell.
\end{equation*}
Define the vector (recall that $a_k >0$) 
$\textbf{p}^{(\ell)}_k = a_k^{1/3} {\bf b}^{(\ell)}(t_k) \in \mathbb{R}^{n_\ell}$, where
\begin{equation*} \label{eqn:galten_int}
{\bf b}^{(\ell)}(t_k)
= \left[b^{(\ell)}_{i_\ell}(t_k)\right]_{i_\ell=1}^{n_\ell} \in \mathbb{R}^{n_\ell}
\quad \text{with } \quad b^{(\ell)}_{i_\ell}(t_k)= 
\int_{\mathbb{R}} \psi^{(\ell)}_{i_\ell}(x_\ell) e^{- t_k^2 x^2_\ell } \mathrm{d}x_\ell,
\end{equation*}
then the $3$rd order tensor $\mathbf{P}$ can be approximated by 
the $R$-term canonical representation
\begin{equation} \label{eqn:sinc_general}
    \mathbf{P} \approx  \mathbf{P}_R =
\sum_{k=-M}^{M} a_k \bigotimes_{\ell=1}^{3}  {\bf b}^{(\ell)}(t_k)
= \sum\limits_{q=1}^{R} {\bf p}^{(1)}_q \otimes {\bf p}^{(2)}_q \otimes {\bf p}^{(3)}_q
\in \mathbb{R}^{n\times n \times n},
\end{equation}
where $R=2M+1$. For the given threshold $\varepsilon >0 $,  $M$ is chosen as the minimal number
such that in the max-norm
\begin{equation*} \label{eqn:error_control}
\| \mathbf{P} - \mathbf{P}_R \|  \le \varepsilon \| \mathbf{P}\|.
\end{equation*}
The canonical vectors are renumbered by $k \to q=k+M+1$, 
${\bf p}^{(\ell)}_q ={\bf p}^{(\ell)}_{k} \in \mathbb{R}^n$, $\ell=1, 2, 3$.
The canonical tensor ${\bf P}_{R}$ in (\ref{eqn:sinc_general})
approximates the discretized 3D symmetric kernel function 
$p({\|x\|})$ ($x\in \Omega$), 
centered at the origin, such that ${\bf p}^{(1)}_q={\bf p}^{(2)}_q={\bf p}^{(3)}_q$ ($q=1,...,R$).

In the following, we also consider a Tucker approximation of the $3$rd order tensor ${\bf P}$.
Given rank parameters ${\bf r}=(r_1,r_2,r_3)$, the set of rank-${\bf r}$ Tucker tensors
(the Tucker format) is defined by the following parametrization, 
${\bf T}=[ t_{{i_1}{i_2}{i_3}} ]\in \mathbb{R}^{n \times n \times n}$ (${i_\ell} \in \{1,...,n\}$),
\begin{equation} \label{eqn:Tucker_singePot}
 {\bf T}: = \sum\limits_{{\bf k}= {\bf 1}}^{\bf r} b_{\bf k} 
{\bf t}^{(1)}_{k_1}  \otimes {\bf t}^{(2)}_{k_2}\otimes {\bf t}^{(3)}_{k_3}
\equiv {\bf B}\times_1 { T}^{(1)}\times_2 {T}^{(2)}\times_3 {T}^{(3)}, 
\end{equation}
where the orthogonal side-matrices 
${T}^{(\ell)}=[{\bf t}^{(\ell)}_1 ... {\bf t}^{(\ell)}_{r_\ell}] 
\in \mathbb{R}^{n \times r_\ell}$, 
$\ell=1,2,3$, define the set of Tucker vectors. 
Here $\times_\ell$ means the contracted product a tensor with a vector, 
and ${\bf B}\in \mathbb{R}^{r_1 \times r_2 \times r_3} $ is the core coefficients tensor.
Choose the truncation error $\varepsilon >0$ for the canonical approximation ${\bf P}_{R}$
obtained by the quadrature method, then compute the best orthogonal Tucker 
approximation of ${\bf P}$ with tolerance $O(\varepsilon)$ by applying 
the canonical-to-Tucker algorithm \cite{KhKh3:08} to the canonical 
tensor ${\bf P}_{R}\mapsto {\bf T}_{\bf r} $. 
The latter algorithm is based on the rank optimization via ALS iteration.  
The rank parameters ${\bf r}$ of the resultant Tucker approximand ${\bf T}_{\bf r}$
is minimized subject to the $\varepsilon$-error control,
\begin{equation*} \label{eqn:Tuckerror_contr}
\| \mathbf{P}_{R} - \mathbf{T}_{\bf r} \|  \le \varepsilon \| \mathbf{P}_{R}\|.
\end{equation*}
\begin{remark}\label{rem:TuckerRank}
Since the maximal Tucker rank does not exceed the canonical one we apply the approximation results
for canonical tensor to derive  the exponential convergence in Tucker rank 
for the wide class of functions $p$. This implies the relation $\max\{ r_\ell\} =O(|\log \varepsilon|^2)$
which can be observed in all numerical test implemented so far.
\end{remark}

\begin{figure}[htbp]
\centering
\includegraphics[width=7.0cm]{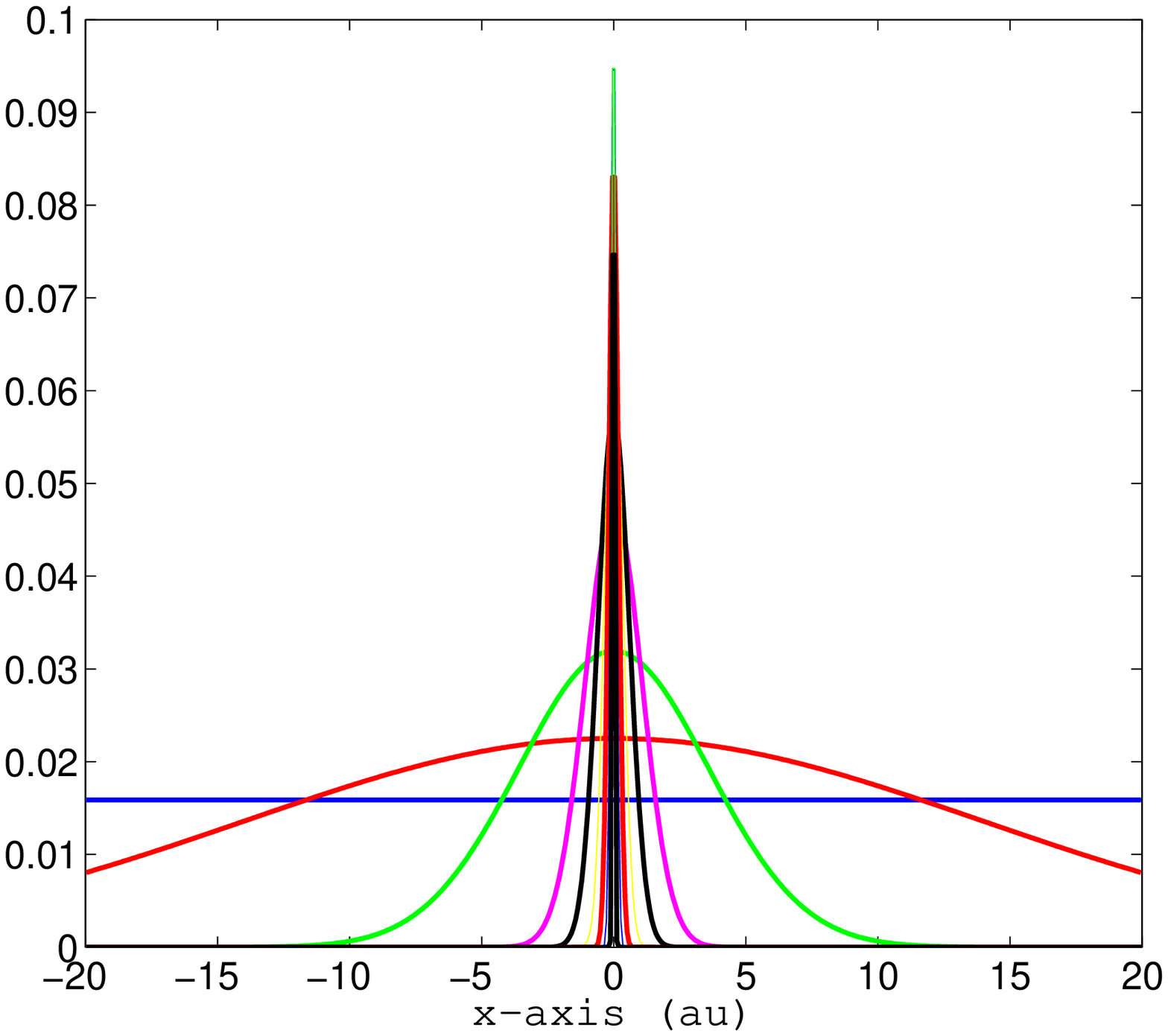}
\includegraphics[width=7.0cm]{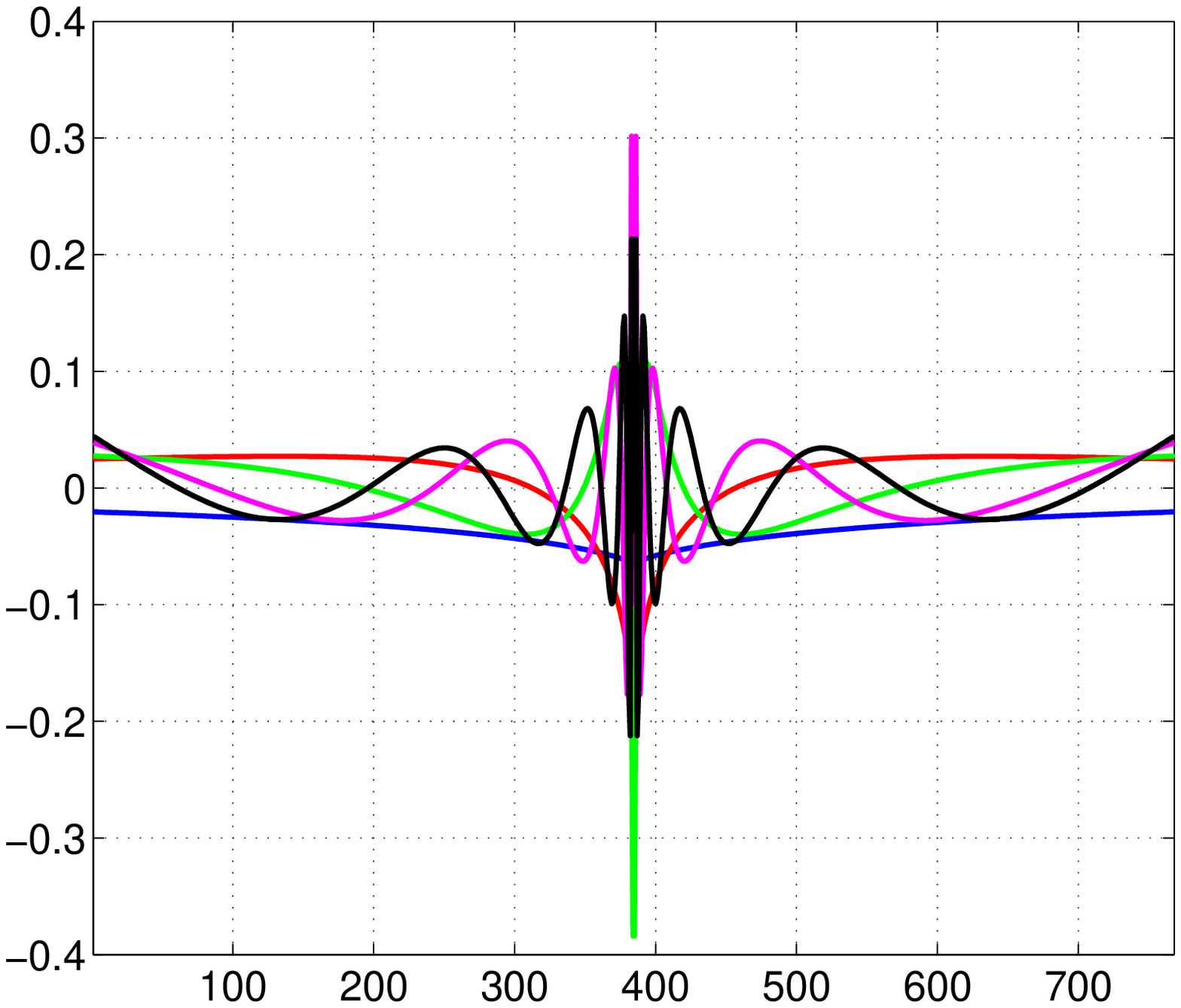}
\caption{Vectors  of the canonical $\{{\bf p}^{(1)}_q\}_{q=1}^R$ (left) 
and Tucker $\{{\bf t}^{(1)}_k\}_{k=1}^{r_1}$ (right) tensor representations 
for the single Newton kernel displayed along  $x$-axis.}
\label{fig:Newton}  
\end{figure}

Figure \ref{fig:Newton} displays several vectors of the canonical and Tucker tensor 
representations 
for a single Newton kernel along $x$-axis from a set $\{P^{(1)}_q\}_{q=1}^R$.
Symmetry of the tensor ${\bf P}_{R}$ implies that the canonical vectors ${\bf p}^{(2)}_q$ and ${\bf p}^{(3)}_q$ 
corresponding to $y$ and $z$-axes, respectively, are of the same shape as ${\bf p}^{(1)}_q$.
It is clearly seen that there are canonical/Tucker vectors representing the long-, 
intermediate- and short-range contributions to the total electrostatic potential. This interesting feature 
will be also recognized for the low-rank lattice sum of potentials 
(see \S \ref{ssec:Can2Tuck_lattice}).

Table \ref{Table_Times} presents CPU times (sec) for generating a canonical rank-$R$ tensor 
approximation of the single Newton kernel over $n\times n\times n $ 3D Cartesian grid, 
corresponding to Matlab implementation on a terminal of the 8 AMD Opteron Dual-Core processor.
The corresponding mesh sizes are given in {\it Angstroms}. 
We observe a logarithmic scaling of the canonical rank $R$ in the grid size $n$, while the maximal 
Tucker rank has the tendency to decrease for larger $n$.
The compression rate for the grid $73768^3$, that is the ratio $n^3/(n R)$ for the 
canonical format and $n^3/(3r^3 n )$ for the Tucker format are of the order of
$10^8$ and $10^7$, respectively. 

\begin{table}[htb]
\begin{center}%
\begin{tabular}
[c]{|r|r|r|r|r|r|}%
\hline
grid size $n^3 $   & $4608^3$  & $9216^3$ & $18432^3$ & $36864^3$   & $73768^3$\\
 \hline
mesh size $h\, (\AA{})$    & $0.0019$  & $0.001$ & $4.9\cdot 10^{-4}$ & $2.8\cdot 10^{-4}$
& $1.2\cdot 10^{-4}$\\
 \hline  \hline
 Time (Canon.)   &  $2.$    &   $2.7$  &   $8.1 $ &  $38$    & $164$ \\
 \hline
Canonical rank $R$ &  $34$ &   $37$  &   $39 $   & $41$    & $43$ \\
 \hline \hline
Time (C2T)   &  $17$    &   $38$  &   $85 $ &  $200$    & $435$ \\
 \hline
Tucker rank   &  $12$   &   $11$  &   $10$   & $8$ & $6$ \\
 \hline
 \end{tabular}
\caption{CPU times (Matlab) to compute with tolerance $\varepsilon = 10^{-6}$
canonical and Tucker vectors of ${\bf P}_{R}$ for the single Newton kernel in a box. }
\label{Table_Times}
\end{center}
\end{table}

Notice that the low-rank canonical/Tucker approximation of the tensor ${\bf P}$ is 
the problem independent task, hence the respective canonical/Tucker vectors can be precomputed at once
on  large enough 3D $n\times n\times n $ grid, and then stored for the multiple use. 
The storage size is bounded by $R n$ or $3 r n + r^3$.

\subsection{Low-rank representation for the general class of kernels} 
\label{ssec:GeneraKern}

Along with Coulombic systems corresponding to $p(\|x\|)= \frac{1}{\|x\|}$,
the tensor approximation described above can be also applied to a wide class of commonly 
used long-range kernels $p(\|x\|)$ in $\mathbb{R}^3$, for example, 
to the Slater, Yukawa, 
Lennard-Jones or Van der Waals and dipole-dipole interactions potentials 
defined as follows, 
\[
 \mbox{Slater function:} \quad p(\|x\|)=\exp(- \lambda {\|x\|}),\quad \lambda >0,
\]
\[
 \mbox{Yukawa kernel:} \quad p(\|x\|)= \frac{\exp(- \lambda {\|x\|})}{\|x\|},\quad \lambda >0,
\]
\[
\mbox{Lennard-Jones potential:} \quad p(\|x\|)= 4 \epsilon 
\left[\left(\frac{\sigma }{\|x\|}\right)^{12} - \left(\frac{\sigma }{\|x\|}\right)^{6}  \right],
\]
The simplified version of the Lennard-Jones potential is the so-called Buckingham function
\[
\mbox{Buckingham potential:} \quad p(\|x\|)= 4 \epsilon 
\left[e^{\|x\|/r_{0}} - \left(\frac{\sigma }{\|x\|}\right)^{6}  \right].
\]
The electrostatic potential energy for the dipole-dipole interaction due to Van der Waals forces 
is defined by
\[
 \mbox{Dipole-dipole interaction energy:} \quad p(\|x\|)= \frac{C_0}{\|x\|^3}. 
\]
The quasi-optimal low-rank decompositions based on the $sinc$-quadrature approximation 
to the Laplace transforms of the above mentioned functions can be rigorously proven for
a wide class of generating kernels. 
In particular, the  following Laplace (or Laplace-Gauss) integral transforms \cite{Zeidler:03}
with a parameter $\rho >0$ can be applied for the $sinc$-quadrature approximation of the 
above mentioned functions,
\begin{eqnarray} 
\label{eqn:LalpSlater}
 e^{-2 \sqrt{\kappa\rho}}&=& \frac{\sqrt{\kappa}}{\sqrt{\pi}}\int_{\mathbb{R}_+}
 t^{-3/2}e^{-\kappa/t} \, e^{ - \rho t } dt,\\
\label{eqn:LalpYukawa}
 \frac{e^{-\kappa \sqrt{\rho}}}{\sqrt{\rho}}&=& \frac{2}{\sqrt{\pi}}\int_{\mathbb{R}_+}
e^{- \kappa^2/t^2}\, e^{-\rho t^2 } dt,\\
\label{eqn:LalpNewt}
 \frac{1}{\sqrt{\rho}}&=& \frac{2}{\sqrt{\pi}}\int_{\mathbb{R}_+} e^{-\rho t^2 } dt,\\
\label{eqn:LalpPoly}
 \frac{1}{{\rho}^n}&=& \frac{1}{(n-1)!}\int_{\mathbb{R}_+}t^{n-1} e^{-\rho t } dt,\quad n=1,2,...
\end{eqnarray}
combined with the subsequent substitution of a parameter $\rho$ by the appropriate 
function $\rho(x)=\rho(x_1,x_2,x_3)$, usually by using an
additive representation $\rho=c_1 x_1^p + c_2 x_2^q  + c_3 x_3^z$.
In the cases (\ref{eqn:LalpPoly}) ($n=1$) and (\ref{eqn:LalpNewt}) 
the convergence rate for the $sinc$-quadrature approximations of type (\ref{eqn:hM}) 
has been considered in \cite{Braess:BookApTh,Braess:95} and 
later analyzed in more detail in \cite{GaHaKh3:05,HaKhtens:04I}.
The case of the Yukawa and Slater kernel has been investigated in 
\cite{Khor:06,Khor_CVS:07}. The exponential error bound for the 
general transform (\ref{eqn:LalpPoly}) can be derived by minor modifications of 
the above mentioned results.

\begin{remark}\label{rem:Lapl_latticeSum}
The idea behind the low-rank tensor representation for a sum of spherically symmetric potentials 
on a 3D lattice can be already recognized on the continuous level by 
introducing the Laplace transform of the generating kernel.
For example, in representation (\ref{eqn:LalpYukawa}) with the particular choice $\kappa=0$,
that is given by (\ref{eqn:LalpNewt}), we can set up $\rho=x_1^2 + x_2^2  + x_3^2$, i.e. 
$p(\|x\|)=1/\|x\|$, ($1 \leq x_\ell < \infty $), and apply 
the $sinc$-quadrature approximation as in (\ref{eqn:laplace})-(\ref{eqn:hM}),
\begin{align} \label{eqn:laplaceNewt} 
p(z)=\frac{2}{\sqrt{\pi}}\int_{\mathbb{R}_+}  e^{- t^2 z^2} \,\mathrm{d}t \approx
\sum_{k=-M}^{M} a_k e^{- t_k^2 z^2} \quad \mbox{for} \quad |z| > 0.
\end{align} 
Now the simple sum on a rectangular lattice of width $b>0$,
\[
\Sigma_{L}(x)= \sum\limits_{i_1,i_2,i_3=1}^L \frac{1}{\sqrt{(x_1+i_1 b)^2 + (x_2+i_2 b)^2  + (x_3+i_3 b)^2}},
\]
can be represented by the agglomerated integral transform
 \begin{equation} \label{eqn:WindowCan_sumExp}
\begin{split}
  \Sigma_{L}(x)& = \frac{2}{\sqrt{\pi}}\int_{\mathbb{R}_+}[ \sum\limits_{i_1,i_2,i_3=1}^L  
e^{-[(x_1+i_1 b)^2 + (x_2+i_2 b)^2  + (x_3+i_3 b)^2] t^2 }] dt \\
             & =\frac{2}{\sqrt{\pi}}\int_{\mathbb{R}_+}
\sum\limits_{k_1=1}^L e^{-(x_1+k_1 b)^2 t}\sum\limits_{k_2=1}^L e^{-(x_2+k_2 b)^2 t}
\sum\limits_{k_3=1}^L e^{-(x_3+k_3 b)^2 t} dt, 
\end{split}
\end{equation}
where the integrand is separable. Representation (\ref{eqn:WindowCan_sumExp}) indicates that applying 
the same quadrature approximation 
to the lattice sum integral (\ref{eqn:WindowCan_sumExp}) as that for the single 
kernel (\ref{eqn:laplaceNewt}) will lead to the decomposition of the total sum
of potentials with the same canonical rank as for the single one.
\end{remark}

In the following, we construct the low-rank canonical and Tucker 
decompositions of the lattice sum of interaction potentials 
discretized on the fine representation 3D-grid and applied to the 
general class of kernel functions and more general configuration of a lattice.

%

\section{Tucker decomposition for lattice sum of potentials} \label{sec:TensSum}

\subsection{Direct tensor sum for a moderate number of arbitrarily distributed potentials}
\label{ssec:sum_unit}

In this paragraph, we recall the direct tensor summation of the electrostatic potentials
for a moderate number of arbitrarily distributed sources 
as introduced in \cite{KhorVBAndrae:11,VKH_solver:13}. 

The basic example in electronic structure calculations 
is concerned with the nuclear potential operator 
describing the Coulombic interaction of electrons with the nuclei in a molecular 
system in a box corresponding to the choice $p(\|{x} \|)=\frac{1}{\|{x} \|}$.
We consider a function $v_c(x)$ describing the interaction potential of 
several nuclei
in a computational box 
$\Omega = [-b/2,b/2]^3 \subset \mathbb{R}^3$,  
\begin{equation}\label{eqn:V_c_GenerP}
 v_c(x)=  \sum_{\nu=1}^{M_0} Z_\nu p(\|{x}-a_\nu \|),\quad
Z_\nu >0, \;\; x,a_\nu \in \Omega, 
\end{equation}
where $M_0$ is the (moderate) number of nuclei in $\Omega$, and $a_\nu$,  $Z_\nu >0$, 
represent their coordinates and ``charges``, respectively.
We are interested in the low-lank representation of 
the projected tensor ${\bf V}_c$ along the line of \S\ref{ssec:CoulombUnit},
\[
\mathbf{V}_c:=\left[\int_{\mathbb{R}^3} {\psi_{\tb{i}}({x})}v_c({x}) \,\, \mathrm{d}{x} \right]
\in \mathbb{R}^{n\times n \times n}.
\]
Similar to \cite{VKH_solver:13,VeBoKh:Ewald:14}, we first approximate the non-shifted kernel 
$p({\|x\|})$ on the auxiliary 
extended box $\widetilde{\Omega}=[-b,b]^3 $ in the canonical format by its projection onto the basis set
$\{ \psi_\textbf{i}\}$ of piecewise constant functions as described in \S\ref{ssec:CoulombUnit}, and
defined on a $2n\times 2n \times 2n$ uniform tensor grid $\widetilde{\Omega}_{2n}$ with
the mesh size $h$, with embedding ${\Omega}_{n} \subset \widetilde{\Omega}_{2n}$.
This defines  the ''reference`` rank-$R$ canonical tensor as above 
\begin{equation} \label{master_pot}
\widetilde{\bf P}_R= 
\sum\limits_{q=1}^{R} \widetilde{\bf p}^{(1)}_q \otimes \widetilde{\bf p}^{(2)}_q \otimes \widetilde{\bf p}^{(3)}_q
\in \mathbb{R}^{2n\times 2n \times 2n}.
\end{equation}

For ease of exposition, we assume  that each nuclei coordinate $a_\nu$ is located
exactly\footnote{Our numerical scheme is designed for nuclei positioned arbitrarily  
in the computational box where approximation error of order $O(h)$ is controlled by choosing 
large enough grid size $n$.
Indeed, $1D$ computational cost enables us usage of fine grids of size $n^3\approx 10^{15}$ 
in Matlab implementation, yielding mesh size $h \approx 10^{-4} \div 10^{-5}$ $\AA{}$, i.e. 
$h$ is of the order of the atomic radii.
This grid-based tensor calculation scheme for the nuclear potential operator was tested numerically
in molecular calculations \cite{KhorVBAndrae:11}, where it was compared with the results of 
analytical evaluation of the same operator from benchmark quantum chemical packages.} 
at certain grid-point 
$a_\nu=(i_\nu h-b/2,j_\nu h-b/2,k_\nu h-b/2)$, with some $1 \leq i_\nu,j_\nu,k_\nu \leq n $.
Now we are in a position to introduce the rank-$1$ shift-and-windowing operator  
$$
{\cal W}_{\nu}={\cal W}_{\nu}^{(1)}\otimes {\cal W}_{\nu}^{(2)}\otimes {\cal W}_{\nu}^{(3)}:
\mathbb{R}^{2n\times 2n \times 2n} \to \mathbb{R}^{n\times n \times n}, \quad 
{\mbox for}\quad \nu=1,...,M_0,
$$ 
via 
\begin{equation} \label{eqn:sub_tens}
 {\cal W}_{\nu} \widetilde{\bf P}_R :=
\widetilde{\bf P}_R(i_\nu +n/2:i_\nu +3/2n;j_\nu +n/2 :j_\nu +3/2 n;k_\nu +n/2:k_\nu +3/2n)
\in \mathbb{R}^{n\times n \times n}. 
\end{equation}
With this notation, the projected tensor ${\bf V}_c$ approximating 
the total electrostatic potentials $v_c(x)$ in $\Omega$ is represented by a 
direct sum of low-rank canonical tensors
\begin{equation} \label{eqn:WindowCan_sum}
\begin{split}
{\bf V}_c \mapsto {\bf P}_{c} & = \sum_{\nu=1}^{M_0} Z_\nu {\cal W}_{\nu} \widetilde{\bf P}_R \\
             & =\sum_{\nu=1}^{M_0} Z_\nu  
\sum\limits_{q=1}^{R} {\cal W}_{\nu}^{(1)} \widetilde{\bf p}^{(1)}_q \otimes 
{\cal W}_{\nu}^{(2)} \widetilde{\bf p}^{(2)}_q 
\otimes {\cal W}_{\nu}^{(3)} \widetilde{\bf p}^{(3)}_q\in \mathbb{R}^{n\times n \times n},
\end{split}
\end{equation}
where every rank-$R$ canonical tensor 
${\cal W}_{\nu} \widetilde{\bf P}_R \in \mathbb{R}^{n\times n \times n}$
is thought as a sub-tensor of the reference tensor
$\widetilde{\bf P}_R \in \mathbb{R}^{2n\times 2n \times 2n}$ obtained 
by its shifting and restriction (windowing) onto the $n \times n \times n$ grid 
in the computational box $\Omega_{n} \subset \widetilde{\Omega}_{2n}$. Here a shift
from the origin is specified according to the coordinates of the corresponding nuclei, $a_\nu$,
counted in the $h$-units. 

For example, the electrostatic potential centered at the origin, i.e. with $a_\nu=0$, corresponds
to the restriction of $\widetilde{\bf P}_R\in \mathbb{R}^{2n\times 2n \times 2n}$ 
onto the initial computational box $\Omega_{n}$, i.e. onto the index set (assume that $n$ is even)
$$
{\cal I}_0 = \{(n/2+i,n/2+j,n/2+k): \; i,j,k\in \{1,...,n\}\}.
$$

The projected tensor ${\bf V}_c$ approximating  the function in (\ref{eqn:V_c_GenerP}) is represented 
as a canonical tensor ${\bf P}_{c}$ with the rough  
bound on its rank $R_{c}=rank({\bf P}_{c}) \leq M_0 R $, where $R= rank(\widetilde{\bf P}_{R})$.
However, our numerical tests for moderate size molecules indicate that  
the tensor ranks of the $(M_0 R)$-term canonical sum representing 
${\bf P}_{c}$ can be considerably reduced, such that $R_c \approx R$.
This rank optimization can be implemented, for example, by the multigrid version of the 
canonical rank reduction algorithm, canonical-Tucker-canonical, based on RHOSVD approximation \cite{KhKh3:08}.
The resultant canonical tensor will be denoted by ${\bf P}_{R_c}$.

Along the same line, the direct sum in the Tucker format can be represented by using 
shift-and-windowing projection of the ''reference'' rank-${\bf r}$ Tucker tensor 
\begin{equation} \label{eqn:TuckRef_sum}
\widetilde{\bf T}_{{\bf r}}:= 
\sum\limits_{{\bf k}= {\bf 1}}^{\bf r} b_{\bf k}\widetilde{\bf t}^{(1)}_{k_1} \otimes
\widetilde{\bf t}^{(2)}_{k_2} \otimes \widetilde{\bf t}^{(3)}_{k_3}
\in \mathbb{R}^{2n\times 2n \times 2n},
\end{equation}
approximating the Newton kernel in the Tucker format,
\begin{equation} \label{eqn:WindowTuck_sum}
\begin{split}
{\bf V}_c \mapsto {\bf T}_{c} & = \sum_{\nu=1}^{M_0} Z_\nu {\cal W}_{\nu} \widetilde{\bf T}_{{\bf r}} \\
             & =\sum_{\nu=1}^{M_0} Z_\nu \sum\limits_{{\bf k}= {\bf 1}}^{\bf r} b_{\bf k}
{\cal W}_{\nu}^{(1)} \widetilde{\bf t}^{(1)}_{k_1} \otimes {\cal W}_{\nu}^{(2)} \widetilde{\bf t}^{(2)}_{k_2} 
\otimes {\cal W}_{\nu}^{(3)} \widetilde{\bf t}^{(3)}_{k_3} \in \mathbb{R}^{n\times n \times n},
\end{split}
\end{equation}
As in the case of canonical decomposition,
the rank reduction procedure based on ALS-type iteration applies to the sum of 
Tucker tensors, ${\bf T}_{c}$, resulting in the optimized Tucker tensor 
${\bf T}_{{\bf r}_c}$ with the reduced rank parameter ${\bf r}_c \approx {\bf r}$.

\begin{summary}
 We summarize that a sum of arbitrarily located potentials in a box can 
 be calculated by a shift-and-windowing tensor operation applied to the low-rank  
 canonical/Tucker representations for the ''reference`` tensor. 
 Usually in electronic structure calculations the $\varepsilon$-rank
 of the resultant tensor sum can be reduced to the quasi-optimal level
 of the same order as the rank of a single ''reference`` tensor.
 \end{summary}

The grid-based representation of a sum of electrostatic
potentials given by $v_c(x)$ in the form of a tensor in the canonical or Tucker format enables
its easy projection to some separable basis set,
like GTO-type atomic orbital basis, polynomials or plane waves.

The following example illustrates that calculation of the 
Galerkin matrix in the Tucker tensor format (cf. \cite{KhorVBAndrae:11,VKH_solver:13} for the case 
of canonical representations) is reduced to a combination of 1D Hadamard and 
scalar products \cite{KhKh3:08}. 
%
Suppose, for simplicity, that the basis set is represented by rank-$1$ canonical tensors, $rank({\bf G}_\mu)=1$, 
representing the basis set, i.e.  
$
{\bf G}_\mu = {\bf g}_\mu^{(1)}\otimes {\bf g}_\mu^{(2)} \otimes {\bf g}_\mu^{(3)}
\in \mathbb{R}^{n \times n \times n},
$
with the canonical vectors ${\bf g}_\mu^{(\ell)}\in\mathbb{R}^{n} $, associated with mode $\ell=1,2,3$,
and $\mu=1,\ldots, N_b$, where $N_b$ is the number of basis functions (vectors).

Suppose that a sum of potentials in a box, $v_c(x)$, given by (\ref{eqn:V_c_GenerP}),
is considered as a multiplicative potential in certain operator (say, the 
Hartree-Fock/Kohn-Sham Hamiltonian).
Given the Tucker tensor approximation to $v_c(x)$ in form (\ref{eqn:WindowTuck_sum}),
with the optimized rank parameters ${\bf r}_c=(r_c,r_c,r_c)$, then its projection onto 
the given basis set is represented by the Galerkin matrix, 
$V_c=\{{v}_{km}\}\in \mathbb{R}^{N_b\times N_b}$,  
whose entries are calculated (approximated) by the simple tensor operations, 
\begin{equation} \label{eqn:nuc_pot}
 {v}_{km}=  \int_{\mathbb{R}^3} v_c(x) {g}_k(x) {g}_m(x) dx \approx 
 \langle {\bf G}_k \odot {\bf G}_m ,   {\bf T}_{{\bf r}_c}\rangle, 
\quad 1\leq k, m \leq N_b,
\end{equation}
where
\[
 {\bf G}_k \odot {\bf G}_m :=
 (  {\bf g}_k^{(1)} \odot  {\bf g}_m^{(1)}  ) \otimes ({\bf g}_k^{(2)} 
\odot  {\bf g}_m^{(2)} ) \otimes  (  {\bf g}_k^{(3)} \odot  {\bf g}_m^{(3)}  ) 
\]
denotes the Hadamard (entrywise) product of rank-$1$ tensors. 
The expression (\ref{eqn:nuc_pot}) can be calculated in terms of 1D Hadamard and 
scalar products with linear complexity $O(n)$. 
 

Similar to the case of Galerkin projection onto the well separable basis set, 
many other tensor operations on the canonical/Tucker 
representations of ${\bf V}_{c}$ can be calculated with the linear cost $O(n)$.

Finally, we notice that the approximation error $\varepsilon >0$ 
caused by a separable representation of the nuclear potential 
is controlled by the rank parameter $r_{c}= rank({\bf T}_{{\bf r}_c})\approx C\, r $,
where $C$ mildly depends on the number of nuclei $M_0$ in a system. 
The exponential convergence of the canonical/Tucker approximation in the rank parameters  allows us 
the optimal choice $r_c =O(|\log \varepsilon |)$ adjusting the complexity 
bound $O(|\log \varepsilon | \, n)$, almost independent on $M_0$. 

\subsection{Assembled lattice sums in a box by using the Tucker format }
\label{ssec:Can2Tuck_lattice}

In this paragraph, we introduce the efficient scheme for fast agglomerated summation
on  a lattice in a box in the Tucker tensor format applied to rather general interaction potentials. 

Given the potential sum $v_c$ in the reference unit cell $\Omega_0=[-b/2,b/2]^3$, of size 
$b\times b \times b$,  we consider an interaction potential in a bounded box
$$
\Omega_L =B_1\times B_2 \times B_3,\quad \mbox{with}\quad 
B_\ell = b/2[- L_\ell ,L_\ell], \; \ell=1,2,3,
$$ 
consisting of a union of  $L_1 \times L_2 \times L_3$ unit cells $\Omega_{\bf k}$,
obtained  by a shift of $\Omega_0$ along the lattice vector $b {\bf k}$, where
${\bf k}=(k_1,k_2,k_3)\in \mathbb{Z}^3$, such that 
$k_\ell \in {\cal K}:={\cal K}_{-}\cup {\cal K}_{+}$ 
for $\ell=1,2,3$ with ${\cal K}_{-}:=\{-1,...,-\frac{L_\ell}{2}\}$ and 
${\cal K}_{+}:=\{0,1,...,\frac{L_\ell}{2}-1\}$.
In the following, for ease of exposition, we  consider a lattice of equal sizes $ L_1= L_2=L_3=L =2L_0$.
By the construction $b=n h$, 
where $h >0$ is the mesh-size that is the same for all spacial variables.
Figure \ref{fig:cube_box} illustrates an example of a 3D lattice structure in a box.

%


\begin{figure}[htb]
\centering
\begin{tikzpicture}[scale=0.36]
 
\draw (2,2) rectangle (12,12);
\path[draw] (2,12) -- (6,15); 
\path[draw] (12,12) -- (16,15); 

\draw[style=dashed] (6,5) rectangle (16,15);
\path[draw][style=dashed] (2,2) -- (6,5); 
\path[draw] (12,2) -- (16,5); 

\foreach \x in {8,9,10,11,12,13} {
\foreach \y in {7.2,8.2,9.2,10.2,11.2,12.2} {
\shade[shading = ball, ball color = blue] (\x,\y) circle (.2);
}}

\foreach \x in {8,9,10,11,12,13} {
\path[draw] [color = blue](\x,7.2) -- (\x,12.2); }

\foreach \y in {7.2,8.2,9.2,10.2,11.2,12.2} {
\path[draw] [color = blue](8,\y) -- (13,\y); }

\foreach \x in {7.4,8.4,9.4,10.4,11.4,12.4} {
\foreach \y in {6.7,7.7,8.7,9.7,10.7,11.7} {
\shade[shading = ball, ball color = blue] (\x,\y) circle (.2);
}}

\foreach \x in {7.4,8.4,9.4,10.4,11.4,12.4} {
\path[draw] [color = blue](\x,6.9) -- (\x,11.9); }

\foreach \y in {6.7,7.7,8.7,9.7,10.7,11.78} {
\path[draw] [color = blue](7.4,\y) -- (12.4,\y); }

\foreach \x in {6.8,7.8,8.8,9.8,10.8,11.8} {
\foreach \y in {6.2,7.2,8.2,9.2,10.2,11.2} {
\shade[shading = ball, ball color = blue] (\x,\y) circle (.2);
}}

\foreach \x in {6.8,7.8,8.8,9.8,10.8,11.8} {
\path[draw] [color = blue](\x,6.2) -- (\x,11.2); }

\foreach \y in {6.2,7.2,8.2,9.2,10.2,11.2} {
\path[draw] [color = blue](6.8,\y) -- (11.8,\y); }

\foreach \x in {6.2,7.2,8.2,9.2,10.2,11.2} {
\foreach \y in {5.7,6.7,7.7,8.7,9.7,10.7} {
\shade[shading = ball, ball color = blue] (\x,\y) circle (.2);
}}

\foreach \x in {6.2,7.2,8.2,9.2,10.2,11.2} {
\path[draw] [color = blue](\x,5.7) -- (\x,10.7); }

\foreach \y in {5.7,6.7,7.7,8.7,9.7,10.7} {
\path[draw] [color = blue](6.2,\y) -- (11.2,\y); }
\end{tikzpicture}
\caption{Rectangular  $ 6\times 6\times 4$ lattice in a box.}
\label{fig:cube_box}
\end{figure}
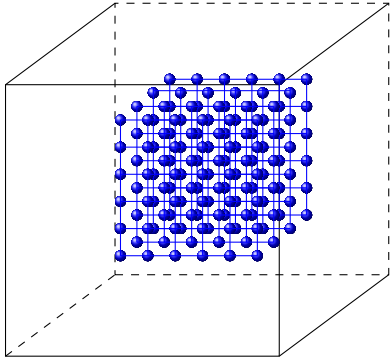

The potential $v_{c_L}(x)$, for $x\in \Omega_{L}$ 
is obtained by summation over all unit cells $\Omega_{\bf k}$ in $\Omega_L$,
\begin{equation}\label{eqn:EwaldSumE}
v_{c_L}(x)=  \sum_{\nu=1}^{M_0} Z_\nu \sum\limits_{k_1,k_2,k_3\in {\cal K}} 
p(\|{x} - a_\nu - b {\bf k} \|), \quad x\in \Omega_{L}. 
\end{equation}
Note that conventionally this calculation is performed at each of $L^3$ unit cells 
$\Omega_{\bf k}\subset \Omega_L$, ${\bf k}\in {\cal K}^3$, on the rectangular lattice, which presupposes substantial 
numerical costs at least of the order of $O(L^3)$. 
The presented approach applies not only to the complete rectangular $L\times L\times L$ lattice,
but remains efficient in the case of defected  lattices and for more complicated symmetries. 
It allows to essentially reduce these costs to linear scaling in $L$.

Let $\Omega_{N_L}$ be the $N_L\times N_L\times N_L$ uniform grid on $\Omega_L$ with the 
same mesh-size $h$
as above, and introduce the corresponding space of piecewise constant basis functions 
of the dimension $N_L^3$. In this construction we have $N_L = Ln$.
In the case of canonical sums, we simply follow \cite{VeBoKh:Ewald:14},
and employ, similar to (\ref{master_pot}),  the rank-$R$ ''reference`` tensor defined 
on the larger auxiliary box $\widetilde{\Omega}_{L}$ by scaling ${\Omega}_{L}$ with a factor of $2$,
\begin{equation}\label{eqn:CanMastSumNL}
\widetilde{\bf P}_{{L},R}= \sum\limits_{q=1}^{R} \widetilde{\bf p}^{(1)}_q 
\otimes \widetilde{\bf p}^{(2)}_q \otimes \widetilde{\bf p}^{(3)}_q 
\in \mathbb{R}^{2 N_L\times 2N_L\times 2N_L}.
\end{equation}
Along the same line as in (\ref{eqn:TuckRef_sum}), we introduce the rank-${\bf r}$ ''reference`` Tucker tensor 
$\widetilde{\bf T}_{{L},{\bf r}}\in \mathbb{R}^{2 N_L\times 2 N_L \times 2 N_L}$ 
defined on the auxiliary domain $\widetilde{\Omega}_{L}$.

The next theorem generalizes Theorem 3.1 in \cite{VeBoKh:Ewald:14} to the case
of general function $p(\|x\|)$ in (\ref{eqn:EwaldSumE}) as well as to the case
of Tucker tensor decompositions.
It proves the storage and numerical costs for the lattice sum  of 
single potentials (i.e. corresponding to the choice $M_0=1$, and $a_1 =0$ in (\ref{eqn:EwaldSumE})),
each represented by a rank-$R$ canonical or rank-${\bf r}$ Tucker tensors. 
In what following the windowing operator 
${\cal W} = {\cal W}_{({\bf k})}={\cal W}_{(k_1)}\otimes {\cal W}_{(k_2)}\otimes {\cal W}_{(k_3)}$ 
specifies a shift by the lattice vector $b{\bf k}$.
\begin{theorem}\label{thm:sumCaseE}
(A) Given the rank-$R$ canonical "reference" tensor (\ref{eqn:CanMastSumNL}) approximating
the potential $p(\|x\|)$.
The projected tensor of the interaction potential, ${\bf V}_{c_L}$,   
representing the full lattice sum over $L^3$ cells can be presented by the rank-$R$
canonical tensor ${\bf P}_{c_L}$,
\begin{equation}\label{eqn:EwaldTensorGl}
{\bf P}_{c_L}= 
\sum\limits_{q=1}^{R}
(\sum\limits_{k_1\in {\cal K}} {\cal W}_{({k_1})} \widetilde{\bf p}^{(1)}_{q}) \otimes 
(\sum\limits_{k_2\in {\cal K}} {\cal W}_{({k_2})} \widetilde{\bf p}^{(2)}_{q}) \otimes 
(\sum\limits_{k_3\in {\cal K}} {\cal W}_{({k_3})} \widetilde{\bf p}^{(3)}_{q}).
\end{equation}
The numerical cost and storage size are estimated by $O(R L N_L)$ and $O(R N_L)$,
respectively, where $N_L=n L$ is the univariate grid size. 

(B) Given the rank-${\bf r}$ ''reference`` Tucker tensor 
$\widetilde{\bf T}_{{L},{\bf r}}\in \mathbb{R}^{2 N_L\times 2 N_L \times 2 N_L}$, 
see (\ref{eqn:TuckRef_sum}), approximating the potential function $p(\|x\|)$.
The rank-${\bf r}$ Tucker approximation of a lattice-sum tensor ${\bf V}_{c_L}$ 
can be computed in the form
\begin{equation} \label{eqn:Tuck_LatticeSum}
\begin{split}
 {\bf T}_{c_L} & = 
 \sum\limits_{{\bf m}= {\bf 1}}^{\bf r} b_{\bf m}
(\sum\limits_{k_1\in {\cal K}} {\cal W}_{({k_1})} \widetilde{\bf t}^{(1)}_{m_1}) \otimes 
(\sum\limits_{k_2\in {\cal K}} {\cal W}_{({k_2})} \widetilde{\bf t}^{(2)}_{m_2}) \otimes 
(\sum\limits_{k_3\in {\cal K}} {\cal W}_{({k_3})} \widetilde{\bf t}^{(3)}_{m_3}). 
\end{split}
\end{equation}
The numerical cost and storage size are estimated by $O(3 r L N_L)$ and $O(3r N_L)$, respectively.
\end{theorem}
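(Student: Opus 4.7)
The plan is to exploit the interplay between the separable structure of the reference tensor and the coordinate-aligned nature of the lattice shifts $b{\bf k}=(bk_1,bk_2,bk_3)$. The first step is to observe that, since each shift $b{\bf k}$ acts independently in each Cartesian direction, the windowing operator factorizes as
$$
\mathcal{W}_{({\bf k})}=\mathcal{W}_{(k_1)}\otimes \mathcal{W}_{(k_2)}\otimes \mathcal{W}_{(k_3)},
$$
so that on any rank-$1$ tensor we have
$$
\mathcal{W}_{({\bf k})}\bigl(\widetilde{\bf p}^{(1)}_q\otimes\widetilde{\bf p}^{(2)}_q\otimes\widetilde{\bf p}^{(3)}_q\bigr)
=\bigl(\mathcal{W}_{(k_1)}\widetilde{\bf p}^{(1)}_q\bigr)\otimes\bigl(\mathcal{W}_{(k_2)}\widetilde{\bf p}^{(2)}_q\bigr)\otimes\bigl(\mathcal{W}_{(k_3)}\widetilde{\bf p}^{(3)}_q\bigr).
$$
This is the only structural fact specific to the geometry: it holds because the coordinates of lattice points are rational multiples of $h$ (by the assumption made in \S\ref{ssec:sum_unit}) so that the shift can be realized by integer index translations that commute in each mode.

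Next, for part (A), I would take the direct sum representation (\ref{eqn:WindowCan_sum}) specialized to $M_0=1$, $a_1=0$, $Z_1=1$ and the lattice points $\{b{\bf k}:{\bf k}\in\mathcal{K}^3\}$, giving
$$
{\bf P}_{c_L}=\sum_{k_1,k_2,k_3\in\mathcal{K}}\mathcal{W}_{({\bf k})}\widetilde{\bf P}_{L,R}
=\sum_{q=1}^{R}\sum_{k_1,k_2,k_3\in\mathcal{K}}\bigl(\mathcal{W}_{(k_1)}\widetilde{\bf p}^{(1)}_q\bigr)\otimes\bigl(\mathcal{W}_{(k_2)}\widetilde{\bf p}^{(2)}_q\bigr)\otimes\bigl(\mathcal{W}_{(k_3)}\widetilde{\bf p}^{(3)}_q\bigr).
$$
Applying distributivity of $\otimes$ over $+$ (which is legitimate because the three summation indices are independent and each appears in only one factor), the triple sum collapses mode-wise and yields exactly (\ref{eqn:EwaldTensorGl}). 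The rank is preserved at $R$ because each outer sum over $q$ still has $R$ terms; only the directional vectors have been replaced by their $L$-fold assembled counterparts. For the complexity, each of the $3R$ assembled vectors requires adding $L$ shifted copies of a vector of length $N_L$, costing $O(RLN_L)$ operations and $O(RN_L)$ storage.

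For part (B), I would run the same argument on the Tucker reference tensor (\ref{eqn:TuckRef_sum}). Because the core coefficients $b_{\bf m}$ are scalars, they commute past the shifts, and the identical mode-wise factorization applies to each rank-$1$ summand $\widetilde{\bf t}^{(1)}_{m_1}\otimes\widetilde{\bf t}^{(2)}_{m_2}\otimes\widetilde{\bf t}^{(3)}_{m_3}$. Pulling the triple sum over $\mathcal{K}^3$ inside the core summation and factoring each direction gives (\ref{eqn:Tuck_LatticeSum}). The Tucker rank is preserved at ${\bf r}$, and the assembly of the $3r$ side vectors costs $O(3rLN_L)$ with storage $O(3rN_L)$; the core tensor ${\bf B}$ of size $r_1r_2r_3$ is inherited from the reference and contributes only lower-order terms.

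The only subtle point is the first one: verifying that $\mathcal{W}_{({\bf k})}$ genuinely decomposes as a tensor product of one-dimensional shift-and-window operators on the discrete grid. This follows directly from the definition (\ref{eqn:sub_tens}) of $\mathcal{W}_\nu$, since the sub-tensor extraction is a Cartesian product of index intervals in the three coordinate directions. Once this is in place, the proof is a short formal calculation in the algebra of tensor products, and the rank and cost estimates are immediate bookkeeping. I do not anticipate any real obstacle beyond carefully stating the mode-wise shift identity.
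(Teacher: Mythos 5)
Your proposal is correct and follows essentially the same route as the paper: factor the shift-and-windowing operator into a tensor product of one-dimensional operators, then collapse the triple lattice sum into a product of three univariate sums by multilinearity of $\otimes$ (the paper phrases this same step as repeated application of the concatenation-to-pointwise-sum property for canonical tensors with coinciding factor matrices), and finish with the rank and cost bookkeeping. No gap.
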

\begin{proof}
Conventionally, we fix the index $\nu=1$ in (\ref{eqn:EwaldSumE}), set $a_\nu =0$ and $Z_1=1$, 
and consider only the second sum  defined on the 
complete domain $\Omega_L$,
\begin{equation}\label{eqn:EwaldSumGl}
{v}_{c_L} (x)=   \sum\limits_{k_1,k_2,k_3 \in {\cal K}}
p(\|{x} - b {\bf k} \|), \quad x\in  \Omega_L.
\end{equation}
Then the projected tensor representation of ${v}_{c_L} (x)$ takes the form 
\[
  {\bf P}_{c_L}= \sum\limits_{k_1,k_2,k_3 \in {\cal K}}  {\cal W}_{\nu({\bf k})}  
 \widetilde{\bf P}_{{L},R}= \sum\limits_{k_1,k_2,k_3 \in {\cal K}} \sum\limits_{q=1}^{R}
{\cal W}_{ ({\bf k})}( \widetilde{\bf p}^{(1)}_{q} \otimes \widetilde{\bf p}^{(2)}_{q} 
\otimes \widetilde{\bf p}^{(3)}_{q})
\in \mathbb{R}^{N_L\times N_L  \times N_L},
\]
where the 3D shift vector is defined by  ${\bf k}=(k_1,k_2,k_3)\in \mathbb{Z}^{L\times L\times L}$. 
Taking into account the rank-$1$ separable representation 
of the $\Omega_L$-windowing operator (tracing onto $N_L\times N_L\times N_L$ window),
$$
{\cal W}_{({\bf k})}={\cal W}_{ (k_1)}^{(1)}\otimes {\cal W}_{ (k_2)}^{(2)}
\otimes {\cal W}_{ (k_3)}^{(3)},
$$ 
we rewrite the above summation as 
\begin{equation}\label{eqn:fullsum}
  {\bf P}_{c_L}=  \sum\limits_{q=1}^{R}\sum\limits_{k_1,k_2,k_3 \in {\cal K}}
{\cal W}_{ ({k_1})} \widetilde{\bf p}^{(1)}_{q} \otimes {\cal W}_{ ({k_2})} \widetilde{\bf p}^{(2)}_{q} 
\otimes {\cal W}_{ ({k_3})} \widetilde{\bf p}^{(3)}_{q}.
\end{equation} 
To reduce the large sum over the full 3D lattice, we use the following property of a sum
of canonical tensors, ${\bf C}= {\bf A} + {\bf B}$, with equal ranks $R$ 
and with two coinciding factor matrices, say for $\ell=1,2$: 
the concatenation in the remaining mode $\ell=3$  can be reduced to a 
pointwise summation of the respective canonical vectors,
\begin{equation}
\label{eqn:conc2sum}
C^{(3)} =[{\bf a}_1^{(3)}+ {\bf b}_1^{(3)}, \ldots ,
{\bf a}_{R}^{(3)}+ {\bf b}_{R}^{(3)}],
\end{equation} 
while the first two mode vectors remain unchanged, 
$C^{(1)}=A^{(1)}= B^{(1)}$, $C^{(2)}=A^{(2)}=B^{(2)}$.
This preserves the same rank parameter $R$ for the resulting sum.
Notice that for each fixed $q$ the inner sum in (\ref{eqn:fullsum}) satisfies the above
property. Repeatedly applying this property to a large number of canonical tensors, 
the  3D-sum (\ref{eqn:fullsum})   is reduced to  a rank-$R$ tensor obtained 
by 1D summations only,
\[
\begin{split}
 {\bf P}_{c_L} & = \sum\limits_{q=1}^{R}
(\sum\limits_{k_1 \in {\cal K}} {\cal W}_{ ({k_1})} \widetilde{\bf p}^{(1)}_{q}) \otimes
 (\sum\limits_{k_2,k_3 \in {\cal K}} {\cal W}_{ ({k_2})} \widetilde{\bf p}^{(2)}_{q} 
\otimes {\cal W}_{ ({k_3})} \widetilde{\bf p}^{(3)}_{q})\\
 &= \sum\limits_{q=1}^{R} 
 (\sum\limits_{k_1 \in {\cal K}} {\cal W}_{ ({k_1})} \widetilde{\bf p}^{(1)}_{q}) \otimes 
(\sum\limits_{k_2 \in {\cal K}} {\cal W}_{ ({k_2})} \widetilde{\bf p}^{(2)}_{q}) \otimes 
(\sum\limits_{k_3 \in {\cal K}} {\cal W}_{ ({k_3})} \widetilde{\bf p}^{(3)}_{q}).
\end{split}
\]
The numerical cost are estimated by using the standard properties of canonical tensors.

 In the case of Tucker representation we apply the similar argument to obtain 
\begin{equation*} \label{eqn:Tuck_LatticeSumF}
\begin{split}
 {\bf T}_{c_L} & = 
\sum\limits_{k_1,k_2,k_3 \in {\cal K}} {\cal W}_{({\bf k})} \widetilde{\bf T}_{L,{\bf r}} \\
               & =
 \sum\limits_{{\bf m}= {\bf 1}}^{\bf r} b_{\bf m}
(\sum\limits_{k_1 \in {\cal K}} {\cal W}_{({k_1})} \widetilde{\bf t}^{(1)}_{m_1}) \otimes 
(\sum\limits_{k_2 \in {\cal K}} {\cal W}_{({k_2})} \widetilde{\bf t}^{(2)}_{m_2}) \otimes 
(\sum\limits_{k_3 \in {\cal K}} {\cal W}_{({k_3})} \widetilde{\bf t}^{(3)}_{m_3}). 
\end{split}
\end{equation*}
Simple complexity estimates complete the proof.
\end{proof}

Figure \ref{fig:TuckCan_assembl} illustrates the shape of several Tucker vectors obtained 
by assembling vectors $\widetilde{\bf t}^{(1)}_{m_1}$ along $x_1$-axis. 
It can be seen that assembled Tucker vectors accumulate simultaneously 
the contributions of all single potentials involved in the total sum.
Note that the assembled Tucker
vectors do not preserve the initial orthogonality of directional vectors
$\{\widetilde{\bf t}^{(\ell)}_{m_\ell}\}$. In this case the simple 
Gram-Schmidt orthogonalization can be applied.
\begin{figure}[htbp]
\centering
\includegraphics[width=6.0cm]{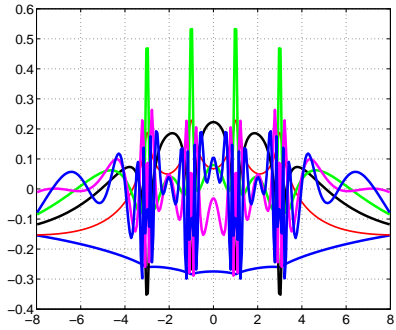}
\caption{Assembled Tucker vectors by using $\widetilde{\bf t}^{(1)}_{m_1}$ 
along the $x_1$-axis, for a sum over lattice $4\times 4\times 1$.}
\label{fig:TuckCan_assembl}  
\end{figure}
\begin{remark}
In the general case $M_0 >1$, the weighted summation over $M_0$ charges leads 
to the rank-$R_c$ canonical tensor representation on the ''reference'' domain 
$\widetilde{\Omega}_L$, which can be used to obtain the rank-$R_c$ 
representation of a sum in the whole $L\times L\times L$ lattice 
\begin{equation}\label{eqn:EwaldTensorM0}
{\bf P}_{c_L}= 
\sum\limits_{q=1}^{R_c}
(\sum\limits_{k_1\in {\cal K}} {\cal W}_{({k_1})} \widetilde{\bf p}^{(1)}_{q}) \otimes 
(\sum\limits_{k_2\in {\cal K}} {\cal W}_{({k_2})} \widetilde{\bf p}^{(2)}_{q}) \otimes 
(\sum\limits_{k_3\in {\cal K}} {\cal W}_{({k_3})} \widetilde{\bf p}^{(3)}_{q}).
\end{equation}
Likewise, the rank-${\bf r}_c$ Tucker approximation of a tensor ${\bf V}_{c_L}$ can be computed in the form
\begin{equation} \label{eqn:Tuck_LatticeSumM0}
\begin{split}
{\bf T}_{c_L} & = 
 \sum\limits_{{\bf m}= {\bf 1}}^{{\bf r}_0} b_{\bf m}
(\sum\limits_{k_1\in {\cal K}} {\cal W}_{({k_1})} \widetilde{\bf t}^{(1)}_{m_1}) \otimes 
(\sum\limits_{k_2\in {\cal K}} {\cal W}_{({k_2})} \widetilde{\bf t}^{(2)}_{m_2}) \otimes 
(\sum\limits_{k_3\in {\cal K}} {\cal W}_{({k_3})} \widetilde{\bf t}^{(3)}_{m_3}). 
\end{split}
\end{equation}
\end{remark}
The next remark generalizes the basic construction to the case of non-uniformly spaced
rectangular lattices.
\begin{remark}
The previous construction applies to the uniformly spaced positions of charges.
However, the agglomerated tensor summation method in both canonical and Tucker formats
applies with slight modification of the windowing operator to a
non-equidistant $L_1 \times L_2 \times L_3 $  tensor lattice. Such lattice sums could 
not be treated by the traditional Ewald summation methods based on the FFT transform.
\end{remark}
\begin{figure}[htbp]
\centering
\includegraphics[width=7.0cm]{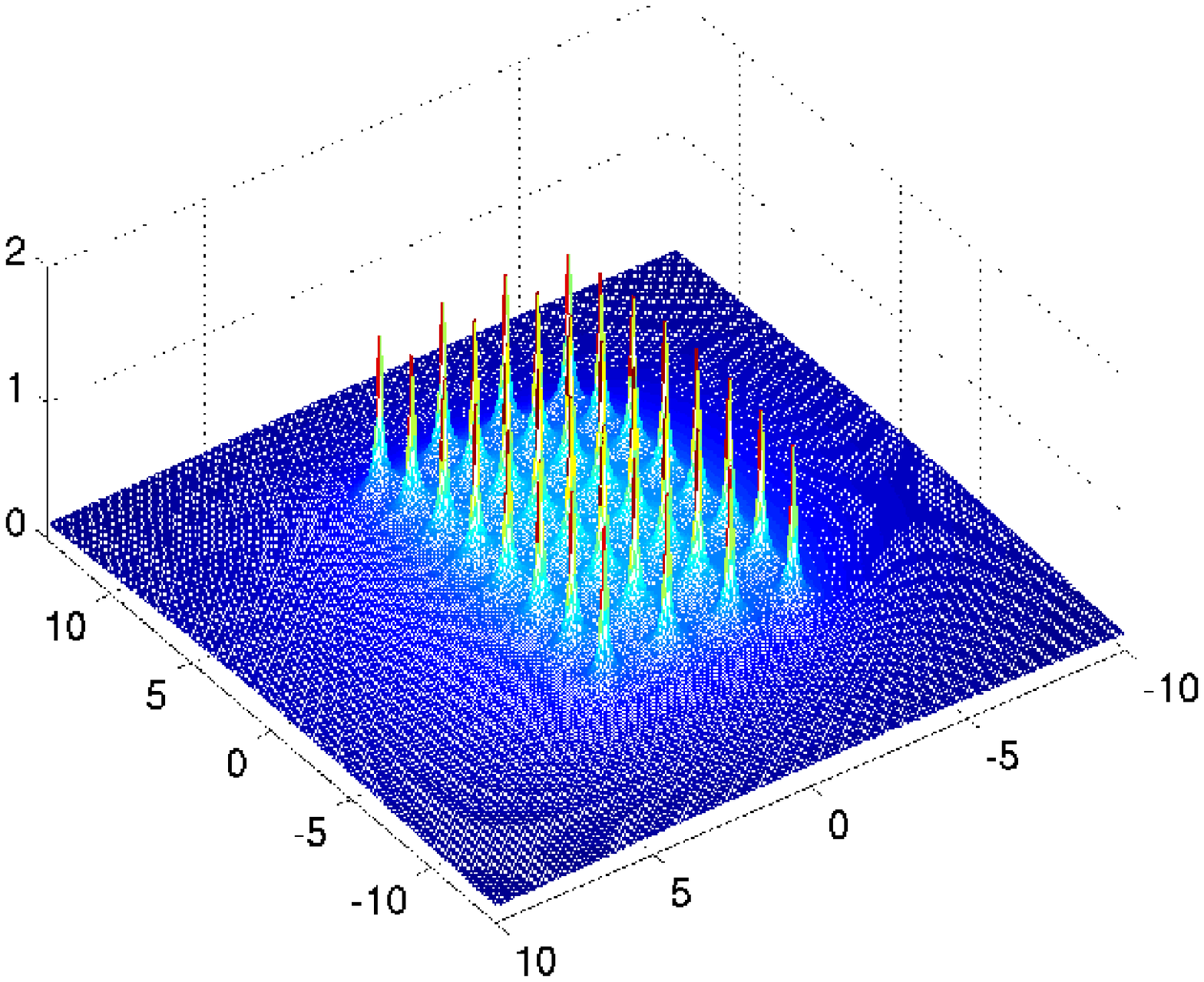}
\includegraphics[width=7.0cm]{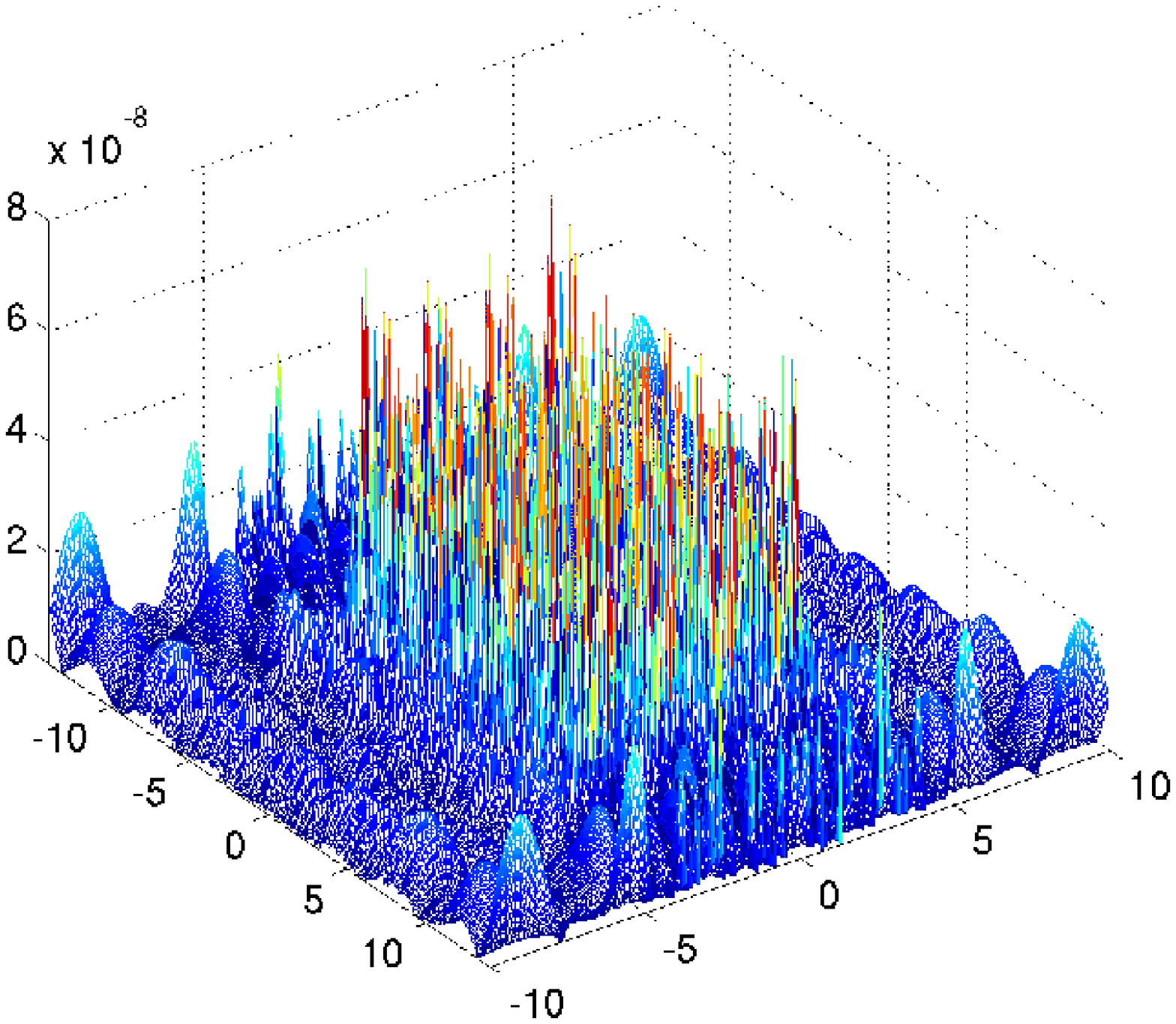}
\caption{Left: Sum of Newton potentials on a $8 \times 4\times 1$ lattice  
generated in a volume with the 3D grid of size $14336 \times 10240 \times 7168$. 
Right: the absolute approximation error (about $8\cdot 10^{-8}$) in the Tucker format.
}
\label{fig:Newt_pot_841}  
\end{figure}
\begin{table}[tbh]
\begin{center}%
\begin{tabular}
[c]{|r|r|r|r|r|}%
\hline\hline
$L^3$         & $4096$  & $32768$  & $262144$ & $2097152$ \\
\hline
Time    & $1.8$  & $0.8$  & $3.1$  &  $15.8$ \\
\hline  
$N_L^3$         & $5632^3$  & $9728^3$  & $17920^3$ & $34304^3$ \\
\hline \hline
 \end{tabular}
\caption{Time (sec.) vs. the total number of potentials $L^3$ for the assembled Tucker 
calculation of the lattice sum ${\bf T}_{c_L}$. 
Mesh size (for all grids) is $h=0.0034$ $\AA{}$. }
\label{Table_timesL}
\end{center}
\end{table}
Both the Tucker and canonical tensor representations (\ref{eqn:Tuck_LatticeSum})
and (\ref{eqn:EwaldTensorGl}) reduce dramatically the numerical costs and storage consumptions. 
Table \ref{Table_timesL} illustrates  complexity scaling $O(N_L L)$ for  
computation of  $L\times L\times L$ lattice sum in the Tucker format,
where the grid-size is given by $N_L\times N_L\times N_L$ with $N_L=n\, L$.
These results confirm our theoretical estimates.

Figure \ref{fig:Newt_pot_841} shows the sum of Newton kernels on a lattice 
$8 \times 4\times 1 $ and the respective Tucker summation error achieved
on the large 3D representation grid with the rank ${\bf r}=(16,16,16)$ Tucker tensor.
The spacial mesh size is about $0.002$ atomic units ($0.001$ \AA{}).

\begin{figure}[htbp]
\centering
\includegraphics[width=5.3cm]{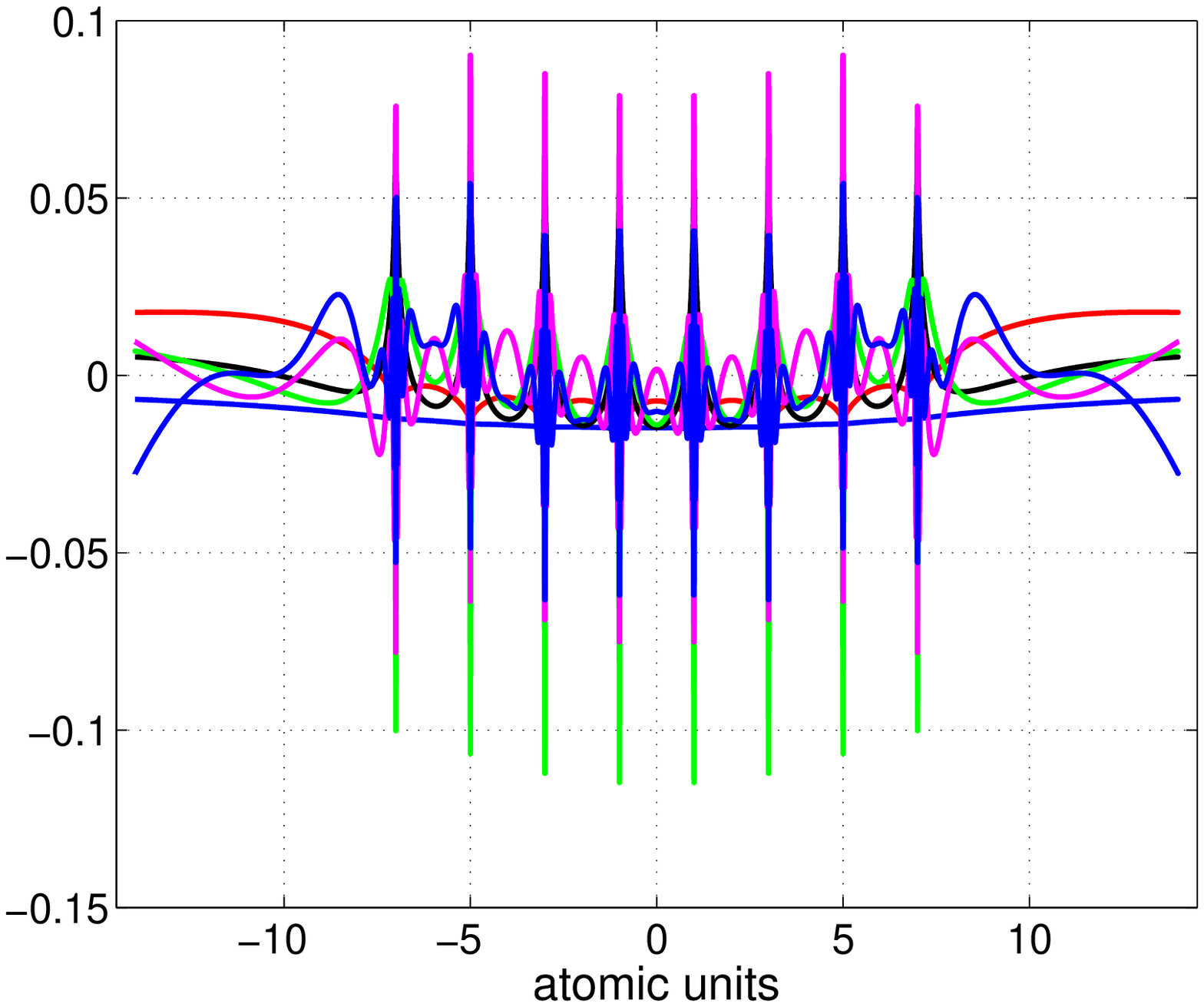}
\includegraphics[width=5.3cm]{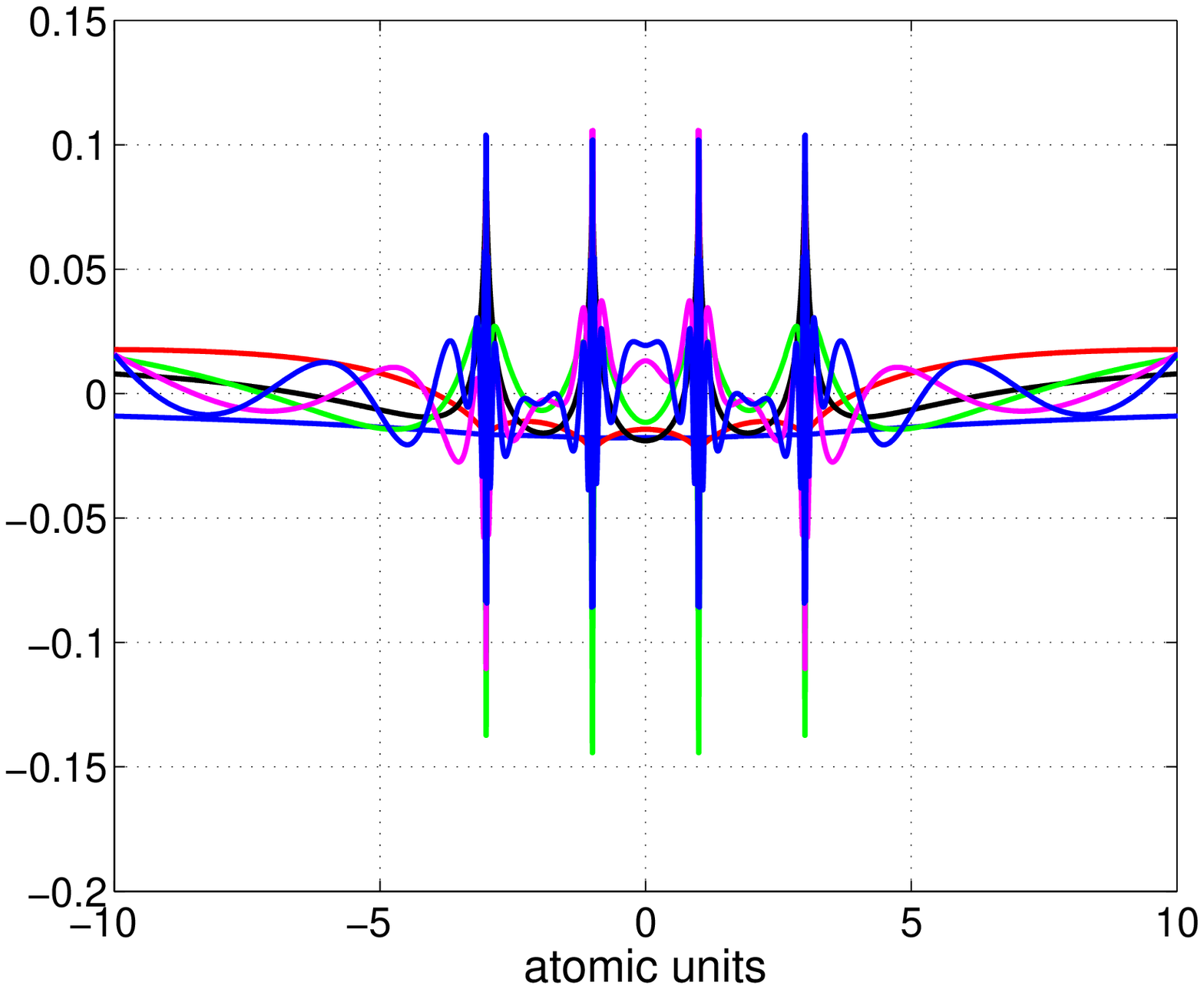}
\includegraphics[width=5.3cm]{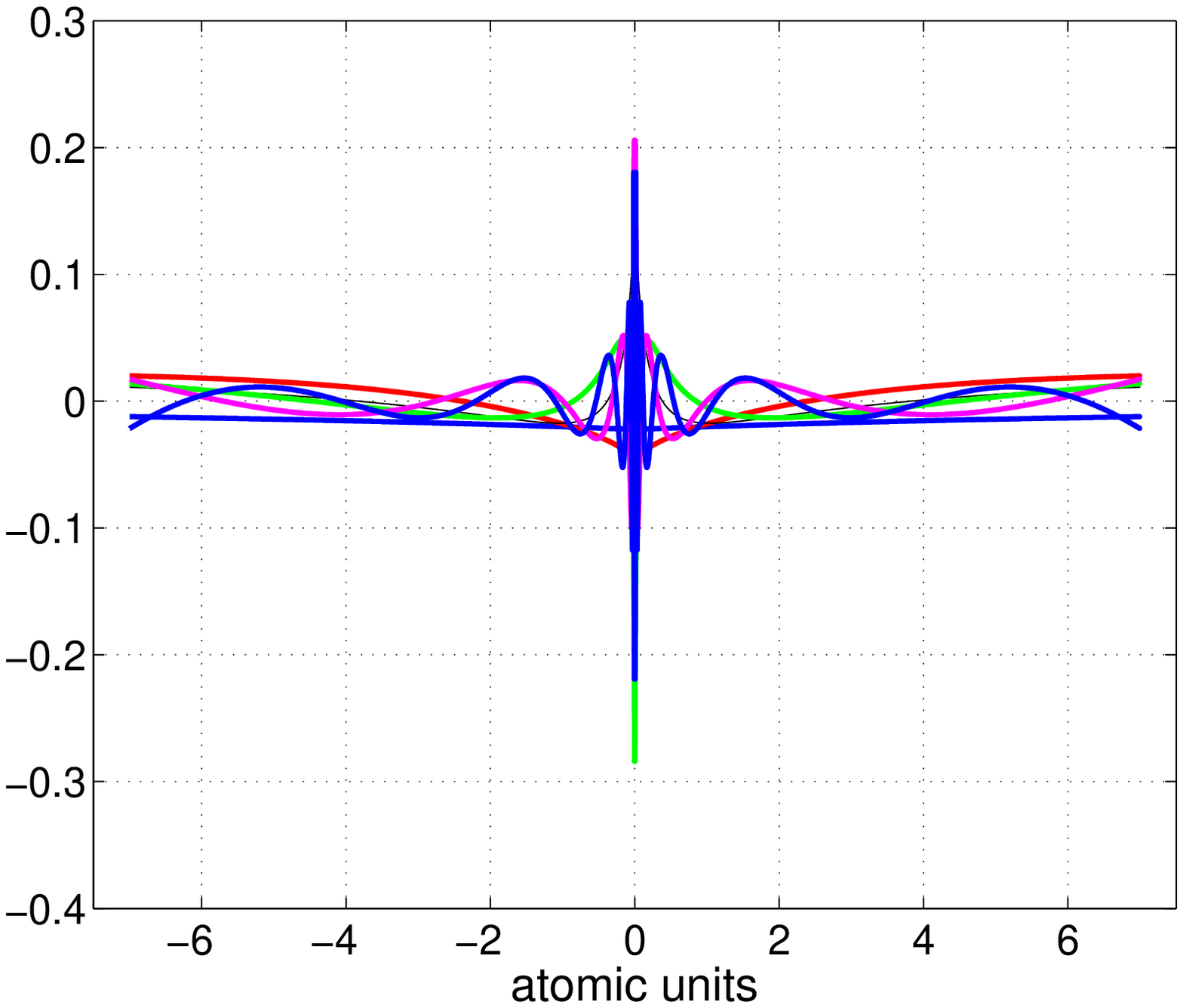}
\caption{Several mode vectors from the C2T approximation 
visualized along $x,y$- and $z$-axis  on a $8 \times 4\times 1$ lattice.}
\label{fig:Tuck_841}  
\end{figure}
Figure \ref{fig:Tuck_841} represents the Tucker vectors obtained from the
canonical-to-Tucker (C2T) approximation of the assembled canonical 
tensor sum of potentials on a $8 \times 4 \times1$ lattice.
In this case the Tucker vectors are orthogonal.


\section{Potential sums on defected lattice} \label{sec:Vac_lattice}

\subsection{Problem setting}
\label{ssec:Vac_latticeProbl}

For lattice sums on the perfect rectangular geometries the resultant canonical and Tucker tensors are proven to 
inherit exactly the same rank parameters as those for the single ''reference`` tensor.
In the case of lattices with defects, say, vacancies or impurities, 
the tensor rank of agglomerated sums in both canonical and Tucker formats may increase dramatically.
In such cases the rank reduction procedure is required.
  
In this section, we analyze the assembled summation of Tucker/canonical tensors 
on the defected lattices  in the  algebraic framework as follows.
Denote the perturbed Tucker tensor by $\widehat{\bf U}$. Let us introduce a set of ${\bf k}$-indices on the  lattice, 
${\cal S}=:\{{\bf k}_1,...,{\bf k}_S\}$, where the unperturbed  Tucker tensor ${\bf U}_{0}:={\bf T}_{c_L}$
initially given by summation over the full rectangular lattice (\ref{eqn:Tuck_LatticeSum}) 
is perturbed (defected) at positions associated with ${\bf k}\in {\cal S}$ by the Tucker tensor 
${\bf U}_{\bf k}={\bf U}_{s}$ ($s=1,...,S$), given by,
\begin{equation} \label{eqn:Tuck_LatticeKi}
\begin{split}
 {\bf U}_{s} & = 
 \sum\limits_{{\bf m}= {\bf 1}}^{{\bf r}_{s}} b_{s,{\bf m}}
 {\bf u}^{(1)}_{s,m_1} \otimes 
 {\bf u}^{(2)}_{s,m_2} \otimes 
 {\bf u}^{(3)}_{s,m_3}, \quad s=1,...,S. 
\end{split}
\end{equation}
Without loss of generality, all Tucker tensors ${\bf U}_{s}$, ($s=0,1,...,S$),
can be assumed orthogonal.

Now the perturbed Tucker tensor $\widehat{\bf U}$ is obtained from the non-perturbed one, 
${\bf U}_{0}$, by adding a sum  of all defects ${\bf U}_{\bf k}$, ${\bf k}\in {\cal S}$,
\begin{equation} \label{eqn:Tuck_LatticeSum_full}
 {\bf U}_{0} \mapsto \widehat{\bf U}={\bf U}_{0} + \sum\limits_{s=1}^S{\bf U}_{s},
\end{equation}
which implies the upper rank estimates for best Tucker approximation of $\widehat{\bf U}$,
$$
\widehat{r}_\ell \leq r_{0,\ell} + \sum\limits_{s=1}^S r_{s,\ell},\quad \mbox{for} \quad \ell=1,2,3.
$$
If the number of perturbed cells, $S$, is large enough, the numerical
computations with the Tucker tensor of rank $\widehat{r}_\ell$ becomes prohibitive
and the rank reduction procedure is required.

In the case of lattice sum in the Tucker format, we propose the generalization to the RHOSVD 
algorithm, that applies directly to a large sum of Tucker tensors. 
In this way the initial RHOSVD algorithm in \cite{KhKh3:08} can be viewed  
as the special case of generalized RHOSVD scheme now applied to a sum of rank-one Tucker tensors. 
The stability of the new rank reduction method can be proven
under mild assumptions on the ''weak orthogonality`` of the Tucker tensors representing defects in the 
lattice sum.
The numerical complexity of the generalized RHOSVD algorithm scales only linearly
in the number of vacancies.


We use the similar notation to describe the summation of canonical tensors on defected lattices.
The non-perturbed canonical tensor ${\bf P}_{0}:={\bf P}_{c_L}$ given by (\ref{eqn:EwaldTensorGl}) 
is substituted by a sum of canonical  tensors representing the expected
perturbations,
\begin{equation} \label{eqn:Can_LatticeSum_full}
 {\bf P}_{0} \mapsto \widehat{\bf P}={\bf P}_{0} + \sum\limits_{s=1}^S{\bf P}_{s}
\end{equation}
with the upper rank estimate for best canonical approximation of 
the perturbed canonical tensor $\widehat{\bf P}$,
\begin{equation} \label{eqn:RankCan_def}
\widehat{r} \leq r_{0} + \sum\limits_{s=1}^S r_{s}. 
\end{equation}
Again, the rank reduction procedure is normally required.

\subsection{Defected lattice sum of canonical tensors}
\label{ssec:Can2Tuck_rev}

We consider a sum of canonical tensors on a lattice with defects located at $S$ sources.
In accordance with (\ref{eqn:Can_LatticeSum_full}) - (\ref{eqn:RankCan_def}), the canonical rank of 
the resultant tensor  may increase at a factor of $S$.
The effective rank of the perturbed sum may be reduced by using the RHOSVD approximation via
Can $\mapsto$ Tuck $\mapsto$ Can algorithm, proposed in \cite{KhKh3:08}. 
This approach basically provides the compressed tensor
with the canonical rank quadratically proportional to those of the respective Tucker
approximation to the sum with defects.
For the readers convenience, in Appendix, we recall the error estimate for RHOSVD approximation
to sums of canonical tensors \cite{KhKh3:08}.  

In what follows, we discuss the stability conditions for RHOSVD approximation
and their applicability in the summation on spherically symmetric interaction potentials.
Given a rank parameter $R\in \mathbb{N}$, we denote by  
 \begin{equation}\label{eqn:CP_form}
   {\bf A}={\sum}_{\nu =1}^{R} \xi_{\nu}
   {\bf a}^{(1)}_{\nu}  \otimes \ldots \otimes {\bf a}^{(3)}_{\nu},
 \quad  \xi_{\nu}\in \mathbb{R},
\end{equation}
the canonical tensor with normalized vectors ${\bf a}_{\nu}^{(\ell)}\in \mathbb{R}^{n_\ell} $ ($\ell=1,...,3$)
that is defined by the side-matrices 
$A^{(\ell)}=\left[{\bf a}^{(\ell)}_{1}...{\bf a}^{(\ell)}_{R}\right], \; 
A^{(\ell)}\in \mathbb{R}^{n \times R}$, obtained by concatenation of 
the corresponding canonical vectors in (\ref{eqn:CP_form}).
The minimal parameter $R$ in (\ref{eqn:CP_form}) is called
the rank (or canonical rank) of a tensor. 
The representation (\ref{eqn:CP_form}) can be written as the rank-$(R,R,R)$ Tucker tensor 
by introducing the diagonal Tucker core tensor
$\boldsymbol{\xi}:=\mbox{diag} \{ \xi_1,...,\xi_R\}\in \mathbb{R}^{R\times R \times R}$ such that
$\xi_{\nu_1,\nu_2,\nu_3}=0$ except when $\nu_1=...=\nu_3$ with 
$\xi_{\nu,...,\nu}=\xi_{\nu}$ ($\nu=1,...,R$), 
\begin{equation}\label{eqn:CP_form_ContrpTuck}
 {\bf A}=\boldsymbol{\xi} \times_1 {A}^{(1)}\times_2 {A}^{(2)} \times_d {A}^{(3)}. 
\end{equation}

Given the rank parameter ${\bf r}=(r_1,r_2,r_3)$.
To define the reduced rank-$\bf r$ HOSVD type
Tucker approximation to the tensor in (\ref{eqn:CP_form}),
we set $n_\ell=n$ and suppose for definiteness that $n\leq R$, so that SVD of the
side-matrix $A^{(\ell)}$ is given by
\[
A^{(\ell)}={Z}^{(\ell)} D_\ell {V^{(\ell)}}^T=
\sum\limits_{k=1}^n \sigma_{\ell,k} {\bf z}_k^{(\ell)}\; {{\bf v}_k^{(\ell)}}^T,\quad
{\bf z}_k^{(\ell)}\in \mathbb{R}^{n},\; {\bf v}_k^{(\ell)} \in \mathbb{R}^{R},
\]
with the orthogonal matrices ${ Z}^{(\ell)}=[{\bf z}_1^{(\ell)},...,{\bf z}_n^{(\ell)}]$, and
${V}^{(\ell)}=[{\bf v}_1^{(\ell)},...,{\bf v}_n^{(\ell)}]$, $\ell=1,2,3$.
Given rank parameters $r_1,...,r_\ell <n $, introduce the truncated SVD of the side-matrix
$A^{(\ell)}$, ${Z}_0^{(\ell)} D_{\ell,0} {V_0^{(\ell)}}^T$, ($\ell=1,2,3$),
where
$
D_{\ell,0}=\mbox{diag} \{\sigma_{\ell,1},\sigma_{\ell,2},...,\sigma_{\ell,r_\ell}\}
$
and ${Z}_0^{(\ell)}\in \mathbb{R}^{n\times r_\ell}$,
${V_0}^{(\ell)}\in \mathbb{R}^{R\times r_\ell} $, represent the orthogonal
factors being the respective sub-matrices in the SVD factors of ${A}^{(\ell)}$.
\begin{definition}\label{def:RHOSVD}
(\cite{KhKh3:08}) 
The reduced HOSVD (RHOSVD) approximation of ${\bf A}$, further called ${\bf A}_{({\bf r})}^0$, 
is defined as the rank-${\bf r}$ Tucker tensor obtained by the projection  of ${\bf A}$ 
in the form (\ref{eqn:CP_form_ContrpTuck}) onto the 
orthogonal matrices of the dominating singular vectors in $Z_0^{(\ell)}$, ($\ell=1,2,3$). 
\end{definition}

 The stability of RHOSVD approximation is formulated in the following assertion. 
\begin{lemma}\label{lem:Can_stab}
Let decomposition (\ref{eqn:CP_form}) satisfy the stability condition 
\begin{equation}\label{Eqn:stabCan}
 \sum\limits_{\nu =1}^{R} \xi_{\nu}^2 \leq C \|{\bf A}\|^2,
\end{equation}
then the quasi-optimal RHOSVD approximation is robust in the relative norm 
\[
 \|{\bf A}- {\bf A}_{({\bf r})}^0\| \leq C \|{\bf A}\| \sum\limits_{\ell=1}^3
(\sum\limits_{k=r_\ell +1}^{\min(n,R)} \sigma_{\ell,k}^2)^{1/2},
\]
where $\sigma_{\ell,k}$ ($k=r_\ell +1,...,n$) denote the truncated singular values.
\end{lemma}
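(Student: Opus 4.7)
The plan is to establish the bound by a standard HOSVD-type telescoping of the projection errors on each mode, combined with the stability hypothesis (\ref{Eqn:stabCan}) and a Cauchy--Schwarz estimate on canonical sums with unit vectors. Write $P^{(\ell)} = Z_0^{(\ell)} {Z_0^{(\ell)}}^T$ for the rank-$r_\ell$ orthogonal projector onto the dominant left singular subspace of the side-matrix $A^{(\ell)}$. In terms of the diagonal core representation \eqref{eqn:CP_form_ContrpTuck}, the RHOSVD approximation reads
\[
{\bf A}_{({\bf r})}^0 = \boldsymbol{\xi} \times_1 P^{(1)} A^{(1)} \times_2 P^{(2)} A^{(2)} \times_3 P^{(3)} A^{(3)}.
\]

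First I would split the error by the usual telescoping identity
\[
{\bf A} - {\bf A}_{({\bf r})}^0 = \sum_{\ell=1}^{3} \boldsymbol{\xi} \times_1 Q_1^{(1)} A^{(1)} \times_2 Q_\ell^{(2)} A^{(2)} \times_3 Q_\ell^{(3)} A^{(3)},
\]
where in the $\ell$-th summand the $\ell$-mode factor is replaced by $(I-P^{(\ell)})A^{(\ell)}$, the preceding modes by $P^{(j)}A^{(j)}$ (for $j<\ell$), and the subsequent modes are left as $A^{(j)}$ (for $j>\ell$). The triangle inequality then reduces the task to estimating each of the three terms separately.

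Next I would exploit the diagonality of $\boldsymbol{\xi}$: each telescope term is a canonical sum of the form $\sum_{\nu=1}^{R} \xi_\nu {\bf b}^{(1)}_\nu \otimes {\bf b}^{(2)}_\nu \otimes {\bf b}^{(3)}_\nu$, where the columns ${\bf b}^{(j)}_\nu$ are either ${\bf a}^{(j)}_\nu$, $P^{(j)}{\bf a}^{(j)}_\nu$, or $(I-P^{(\ell)}){\bf a}^{(\ell)}_\nu$. Since $\|{\bf a}^{(j)}_\nu\|=1$ and the projector is a contraction, in every mode $j\neq\ell$ one has $\|{\bf b}^{(j)}_\nu\|\leq 1$. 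The triangle inequality followed by Cauchy--Schwarz over $\nu$ then gives
\[
\Bigl\| \sum_{\nu=1}^{R} \xi_\nu {\bf b}^{(1)}_\nu \otimes {\bf b}^{(2)}_\nu \otimes {\bf b}^{(3)}_\nu \Bigr\|
\leq \Bigl(\sum_{\nu=1}^{R}\xi_\nu^2\Bigr)^{1/2} \Bigl(\sum_{\nu=1}^{R}\|(I-P^{(\ell)}){\bf a}^{(\ell)}_\nu\|^2\Bigr)^{1/2}.
\]
Recognizing the second factor as $\|(I-P^{(\ell)})A^{(\ell)}\|_F = (\sum_{k=r_\ell+1}^{\min(n,R)}\sigma_{\ell,k}^2)^{1/2}$ and invoking the stability assumption $\sum_\nu\xi_\nu^2\leq C\|{\bf A}\|^2$, each of the three telescope terms is bounded by $\sqrt{C}\,\|{\bf A}\|\,(\sum_{k>r_\ell}\sigma_{\ell,k}^2)^{1/2}$, and summing over $\ell$ yields the claimed estimate (the constant $\sqrt{C}$ is absorbed into $C$).

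The main obstacle is the bound on the canonical sum in the middle step: because the columns of $A^{(\ell)}$ are not orthogonal, one cannot simply compute the norm mode by mode as for a Tucker tensor with orthogonal frames. This is precisely what forces the stability hypothesis into the argument: without \eqref{Eqn:stabCan} the $\ell^2$-norm of the coefficient vector $(\xi_\nu)$ could be arbitrarily large relative to $\|{\bf A}\|$ due to cancellations in the canonical representation, and the Cauchy--Schwarz step would not give a relative bound. Everything else is routine telescoping and Frobenius-norm computations.
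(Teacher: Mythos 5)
Your proof is correct and follows essentially the same route as the paper: the paper's one-line proof just combines the stability condition (\ref{Eqn:stabCan}) with the RHOSVD error estimate (\ref{eqn:error_bound_CtoT}) quoted in the Appendix from \cite{KhKh3:08}, and your telescoping-plus-Cauchy--Schwarz argument is precisely the standard derivation of that estimate, so you have in effect supplied a self-contained version of the same argument. The only blemish is notational: the projectors in your displayed telescoping identity are inconsistently subscripted, though the surrounding text makes the intended decomposition (replace mode $\ell$ by $(I-P^{(\ell)})A^{(\ell)}$, earlier modes by $P^{(j)}A^{(j)}$, later modes by $A^{(j)}$) unambiguous.
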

\begin{proof}
 The proof is a simple consequence of the general error estimate (\ref{eqn:error_bound_CtoT}).
\end{proof}

The stability condition (\ref{Eqn:stabCan}) is fulfilled, in particular, if 

(a) All canonical vectors in (\ref{eqn:CP_form}) are non-negative 
that is the case for $sinc$-quadrature based approximations to Green's kernels based on
integral transforms (\ref{eqn:LalpSlater}) - (\ref{eqn:LalpPoly}), since $a_k >0$.

(b) The partial orthogonality of the canonical vectors holds, i.e. rank-$1$ tensors 
${\bf a}^{(1)}_{\nu}  \otimes \ldots \otimes {\bf a}^{(d)}_{\nu} $ ($\nu=1,...,R$) 
are mutually orthogonal.
We refer to \cite{Kolda:01} for various definitions of orthogonality for canonical tensors.

\subsection{Summation on defected lattice in the Tucker tensor  format }
\label{ssec:TuckSum2Tuck} 

In the case of Tucker sum (\ref{eqn:Tuck_LatticeSum_full}) we define the agglomerated side 
matrices $\widehat{U}^{(\ell)}$ by concatenation
of the directional side-matrices of individual tensors ${\bf U}_s$, $s=0,1,...,S$,
\begin{equation}\label{eqn:SideMTuckS}
\widehat{U}^{(\ell)}=[{\bf u}^{(\ell)}_{1}...{\bf u}^{(\ell)}_{r_{0,\ell}}, {\bf u}^{(\ell)}_{1}...
{\bf u}^{(\ell)}_{r_{1,\ell}},...,{\bf u}^{(\ell)}_{1}...{\bf u}^{(\ell)}_{r_{S,\ell}}]
\in \mathbb{R}^{n\times (r_{0,\ell} + \sum\limits_{s=1,...,S} r_{s,\ell})}, \quad \ell=1,2,3.
\end{equation}
Given the rank parameter ${\bf r}=(r_1,r_2,r_3)$, introduce the truncated SVD of 
$\widehat{U}^{(\ell)}$,
$$
\widehat{U}^{(\ell)}\approx {Z}_0^{(\ell)} D_{\ell,0} {V_0^{(\ell)}}^T,\quad
{Z}_0^{(\ell)}\in \mathbb{R}^{n\times r_\ell}, \quad 
{V_0}^{(\ell)}\in \mathbb{R}^{(r_{0,\ell} + \sum\limits_{s=1,...,S} r_{s,\ell})\times r_\ell},
$$
where  $D_{\ell,0}=\mbox{diag} \{\sigma_{\ell,1},\sigma_{\ell,2},...,\sigma_{\ell,r_\ell}\}$.
Here instead of fixed rank parameter the truncation threshold $\varepsilon >0$ can be chosen.

Now items (a) - (d) in Theorem \ref{thm:C2Tucker} 
can be generalized to the case of Tucker tensors.
In particular, the stability criteria for RHOSVD approximation as 
in Lemma \ref{lem:Can_stab} allows natural extension to the case of 
generalized RHOSVD approximation applied to a sum of Tucker tensors in
(\ref{eqn:Tuck_LatticeSum_full}).

\begin{figure}[htbp]
\centering
\includegraphics[width=7.6cm]{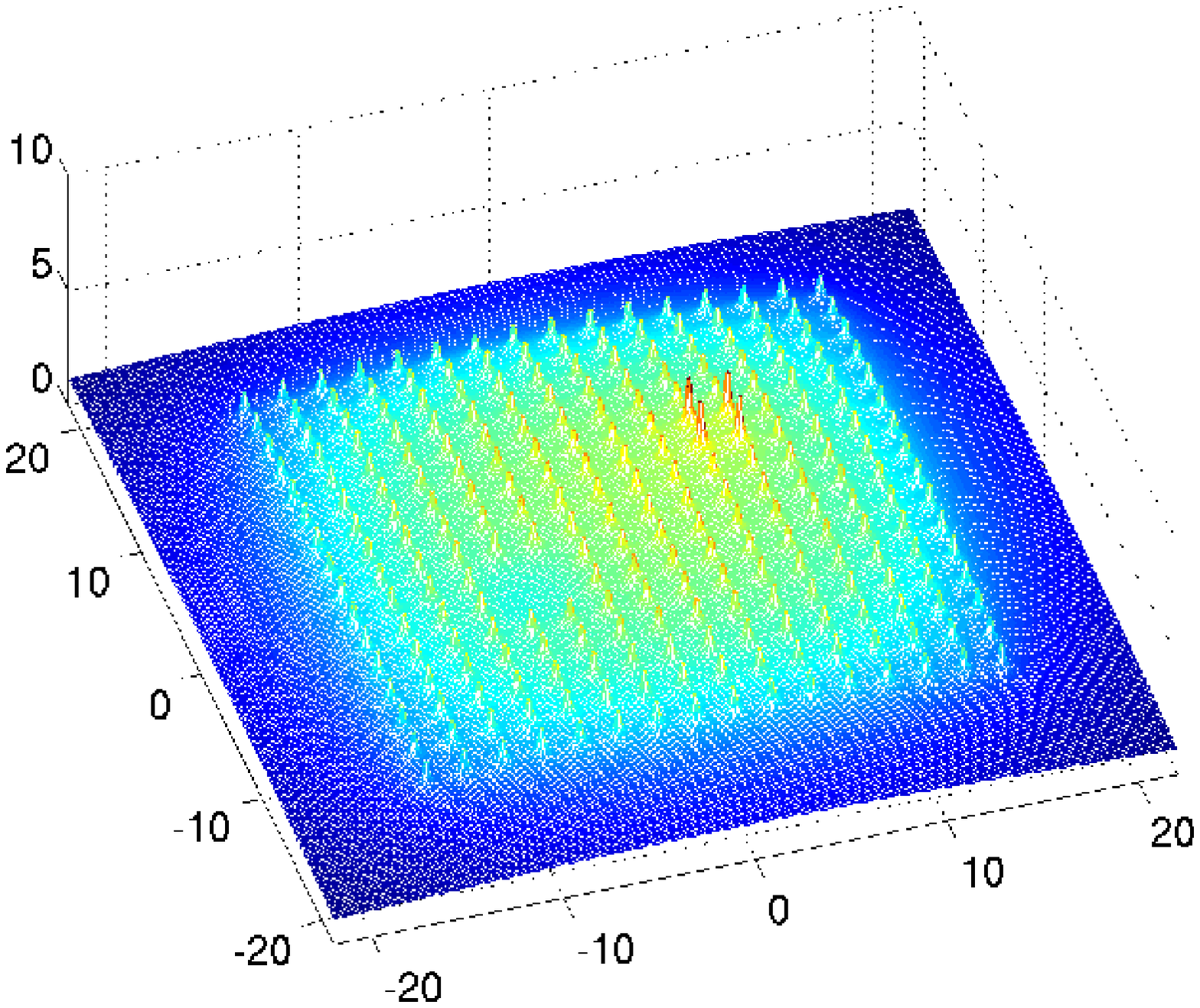}
\includegraphics[width=6.0cm]{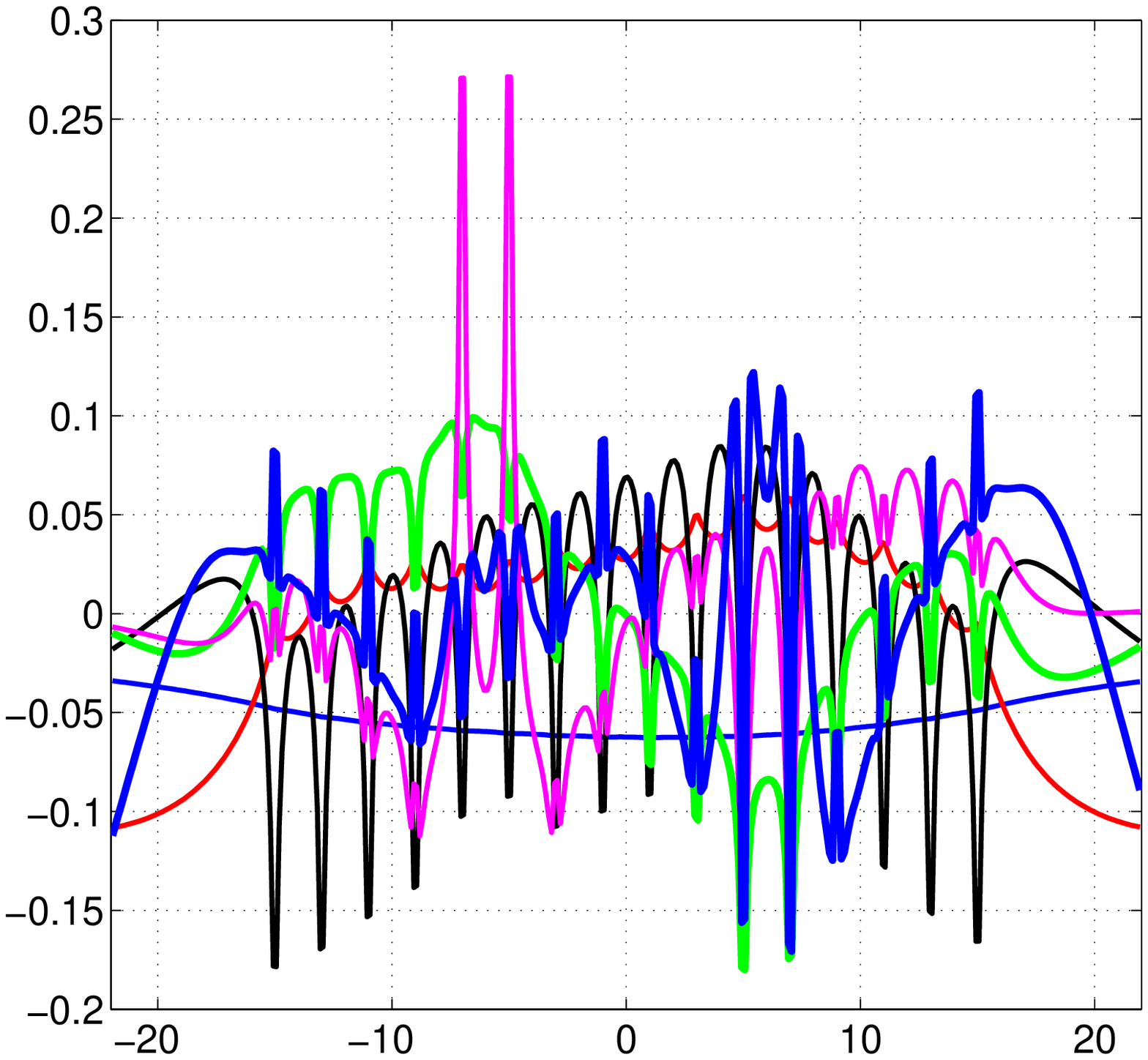}
\caption{Left: assembled grid-based Tucker sum of 3D Newton potentials on a lattice 
$16 \times 16\times 1$, with an impurity/vacancy of size $2\times 2\times1$. 
Right: the Tucker vectors along $x$-axis.}
\label{fig:Tuck_vac}  
\end{figure}

The following theorem provides the error estimate for the generalized RHOSVD approximation 
converting a sum of Tucker tensors to a single Tucker tensor with fixed rank bounds, or 
subject to the given tolerance $\varepsilon >0$.

\begin{theorem} \label{thm:TSum2Tucker} (Tucker-Sum-to-Tucker) \\
Given a sum of Tucker tensors (\ref{eqn:Tuck_LatticeSum_full}) and the rank truncation
parameter ${\bf r}=(r_1,...,r_d)$.\\
(a) Let $\sigma_{\ell,1}\geq\sigma_{\ell,2} ...  \geq\sigma_{\ell,\min(n,R)}$
 be the  singular values of the $\ell$-mode side-matrix
 $\widehat{U}^{(\ell)}\in \mathbb{R}^{n\times R}$ ($\ell=1,2,3$) defined in (\ref{eqn:SideMTuckS}).
 Then the generalized RHOSVD approximation 
 ${\bf U}_{({\bf r})}^0$ 
 obtained by the projection  of $\widehat{\bf U}$ onto the dominating singular 
vectors $Z_0^{(\ell)}$ of the Tucker side-matrices,
$\widehat{U}^{(\ell)} \approx {Z}_0^{(\ell)} D_{\ell,0} {V_0^{(\ell)}}^T $, 
exhibits  the error estimate
\begin{equation}\label{eqn:error_bound_TtoT}
\|\widehat{\bf U}- {\bf U}_{({\bf r})}^0\| \leq |\widehat{\bf B}| \sum\limits_{\ell=1}^d
(\sum\limits_{k=r_\ell +1}^{\min(n,\widehat{r_\ell})} \sigma_{\ell,k}^2)^{1/2},
\quad \mbox{where}\quad
|\widehat{\bf B}|^2 = \sum\limits_{s =0}^{S} \|{\bf B}_{s}\|^2.
\end{equation}
(b) Assume the stability condition for the sum (\ref{eqn:Tuck_LatticeSum_full}),
\[
 \sum\limits_{s =0}^{S} \|{\bf B}_{s}\|^2 \leq C \|\widehat{\bf U}\|^2,
\]
then the generalized RHOSVD approximation provides the quasi-optimal error bound
\[
 \|\widehat{\bf U}- {\bf U}_{({\bf r})}^0\| \leq C \|\widehat{\bf U}\| \sum\limits_{\ell=1}^d
(\sum\limits_{k=r_\ell +1}^{\min(n,\widehat{r_\ell})} \sigma_{\ell,k}^2)^{1/2}.
\]
\end{theorem}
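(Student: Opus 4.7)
The plan is to recast $\widehat{\bf U}$ as a single Tucker tensor and then mimic the telescoping argument used in the canonical-to-Tucker RHOSVD proof of \cite{KhKh3:08}. Concretely, I would combine the individual cores into a block-diagonal core $\widehat{\bf B}\in\mathbb{R}^{\widehat r_1\times\widehat r_2\times\widehat r_3}$ whose $s$-th diagonal block equals ${\bf B}_s$, aligned with the column-block partition of the concatenated side matrices $\widehat U^{(\ell)}$ from (\ref{eqn:SideMTuckS}). Then $\widehat{\bf U}=\widehat{\bf B}\times_1 \widehat U^{(1)}\times_2 \widehat U^{(2)}\times_3 \widehat U^{(3)}$, and the block-diagonal structure gives exactly $\|\widehat{\bf B}\|_F^2=\sum_{s=0}^S\|{\bf B}_s\|^2=|\widehat{\bf B}|^2$.

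Next, I would introduce the orthogonal projectors $\Pi_\ell=Z_0^{(\ell)}{Z_0^{(\ell)}}^T$ onto the dominating singular subspaces of $\widehat U^{(\ell)}$, so that ${\bf U}^0_{({\bf r})}=\Pi_1\Pi_2\Pi_3 \widehat{\bf U}$. Telescoping and using that projectors are non-expansive yields
\begin{equation*}
\|\widehat{\bf U}-{\bf U}^0_{({\bf r})}\|\;\le\;\sum_{\ell=1}^{3}\|(I-\Pi_\ell)\widehat{\bf U}\|.
\end{equation*}
For a fixed $\ell$, the mode-$\ell$ unfolding of $(I-\Pi_\ell)\widehat{\bf U}$ equals $[(I-\Pi_\ell)\widehat U^{(\ell)}]\,Y_\ell$, where $Y_\ell=\widehat{\bf B}_{(\ell)}\bigl(\widehat U^{(m')}\otimes \widehat U^{(m)}\bigr)^T$ with $\{m,m'\}=\{1,2,3\}\setminus\{\ell\}$. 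Applying the submultiplicative bound $\|XY\|_F\le \|X\|_F\|Y\|_2$ and the SVD tail identity $\|(I-\Pi_\ell)\widehat U^{(\ell)}\|_F=\bigl(\sum_{k>r_\ell}\sigma_{\ell,k}^2\bigr)^{1/2}$ then gives each summand in the desired form, provided I can establish $\|Y_\ell\|_2\le |\widehat{\bf B}|$.

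This norm estimate is the main obstacle, because the full matrices $\widehat U^{(m)}$ are not orthonormal — their concatenated structure only gives orthogonality \emph{within} each column block. I would resolve this by observing that the block-diagonal pattern of $\widehat{\bf B}$ forces $Y_\ell$ itself to be row-block-diagonal: its $s$-th row-block is precisely the mode-$\ell$ unfolding of ${\bf B}_s \times_m U_s^{(m)} \times_{m'} U_s^{(m')}$. Since each $U_s^{(m)}$ is orthogonal by the standing assumption, this block has Frobenius norm $\|{\bf B}_s\|$, and summing over disjoint row blocks gives $\|Y_\ell\|_F^2=\sum_{s=0}^S\|{\bf B}_s\|^2=|\widehat{\bf B}|^2$. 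The desired estimate $\|Y_\ell\|_2\le \|Y_\ell\|_F=|\widehat{\bf B}|$ is then immediate, completing the bound in (a).

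Part (b) follows at once: substituting the stability hypothesis $|\widehat{\bf B}|^2=\sum_{s=0}^S\|{\bf B}_s\|^2\le C\|\widehat{\bf U}\|^2$ into the estimate from (a) replaces $|\widehat{\bf B}|$ by $\sqrt{C}\,\|\widehat{\bf U}\|$, yielding the quasi-optimal bound. The entire argument specialises, upon taking all ${\bf B}_s$ to be one-dimensional diagonal cores (canonical sum), to the bound of Lemma \ref{lem:Can_stab}, so the new result genuinely generalises the canonical RHOSVD theory to sums of Tucker inputs.
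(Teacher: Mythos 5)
Your proposal is correct and follows essentially the same route as the paper, which simply defers to the telescoping-projection argument behind the canonical RHOSVD bound (\ref{eqn:error_bound_CtoT}) of Theorem \ref{thm:C2Tucker}; you have merely written out the details the paper leaves implicit, in particular the block-diagonal core recast giving $\|\widehat{\bf B}\|_F^2=\sum_s\|{\bf B}_s\|^2$ and the estimate $\|Y_\ell\|_2\le\|Y_\ell\|_F=|\widehat{\bf B}|$ via the within-block orthogonality of the $U_s^{(m)}$. Your observation that the stability hypothesis actually yields the sharper constant $\sqrt{C}$ rather than $C$ is consistent with the same (harmless) slack already present in Lemma \ref{lem:Can_stab}.
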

\begin{proof} 
Proof of item (a) is similar to those for Theorem \ref{thm:C2Tucker}, 
presented in \cite{KhKh3:08}. Item (b) follows from (\ref{eqn:error_bound_TtoT})
taking into account the stability condition.
\end{proof}

The resultant Tucker tensor ${\bf U}_{({\bf r})}^0$ can be considered as the initial guess for the ALS iteration
to compute best Tucker $\varepsilon$-approximation of a sum of Tucker tensors.

Figure \ref{fig:Tuck_vac} (left) visualizes result of assembled Tucker summation of 3D grid-based Newton 
potentials on a $16 \times 16\times 1$ lattice, with a vacancy and impurity, each 
of $2\times 2\times1$ lattice size. 
Figure \ref{fig:Tuck_vac} (right) shows the corresponding Tucker vectors along $x$-axis.
These vectors clearly represent the local shape of vacancies and impurities. 


\subsection{Summation over non-rectangular lattices}
\label{ssec:NonRect_lattice}

In many practically interesting cases the physical lattice may have 
non-rectangular geometry that
does not fit exactly the tensor-product structure of the canonical/Tucker data arrays.
For example, the hexagonal or parallelepiped type lattices as well as their combination can be considered.
The case study of many particular classes of geometries is beyond the scope of our paper.
Instead, we formulate the main principles on how to apply tensor summation methods to
certain classes of non-rectangular geometries and give a few examples demonstrating the required (minor) 
modifications of the basic agglomerated summation schemes described above. 

\begin{figure}[t]
\begin{minipage}[t]{0.4\linewidth}
\centering
\caption{Hexagonal lattice is a union of two rectangular lattices, ''red`` and ''blue``}
\label{fig:Hexa_grid}
\vspace{0.5cm}
\begin{tikzpicture}[scale=0.5]
\foreach \x in {1.5,3.5,5.5,7.5,9.5} {
\foreach \y in {1.5,3.5,4.5,6.5,7.5,9.5} {
\shade[shading = ball, ball color = blue] (\x,\y) circle (.2);
}}
\foreach \x in {0.5,2.5,4.5,6.5,8.5,10.5}{
\foreach \y in {2,3,5,6,8,9} {
\shade[shading = ball, ball color = blue] (\x,\y) circle (.2);
}}

\foreach \y in {1.5,3.5,4.5,6.5,7.5,9.5} {
\path[draw] [color=blue](0,\y) -- (11,\y);  }

\foreach \x in {1.5,3.5,5.5,7.5,9.5} {
\path[draw] [color=blue](\x,1) -- (\x,10);  }

\foreach \y in {2,3,5,6,8,9} {
\path[draw] [color=red](0,\y) -- (11,\y);  }

\foreach \x in {0.5,2.5,4.5,6.5,8.5,10.5}{
\path[draw] [color=red](\x,1) -- (\x,10);  }
\end{tikzpicture}
\end{minipage}
\hspace{5mm}
\begin{minipage}[t]{0.49\linewidth}
\centering
\caption{Parallelogram-type lattice}
\label{fig:para_grid}

\vspace{1.5cm}
\begin{tikzpicture}[scale=0.7]
\foreach \x in {1,2,3,4,5,6,7,8} {
\shade[shading = ball, ball color = blue] (\x,1) circle (.15);
}

\foreach \x in {1,2,3,4,5,6,7,8} {
\shade[shading = ball, ball color = blue] (\x+0.2,2) circle (.15);
}
\foreach \x in {1,2,3,4,5,6,7,8} {
\shade[shading = ball, ball color = blue] (\x+0.4,3) circle (.15);
}
\foreach \x in {1,2,3,4,5,6,7,8} {
\shade[shading = ball, ball color = blue] (\x+0.6,4) circle (.15);
}

\foreach \x in {1,2,3,4,5,6,7,8} {
\shade[shading = ball, ball color = blue] (\x+0.8,5) circle (.15);
}

\foreach \y in {1,2,3,4,5} {
\path[draw] [color=red,thick](1,\y) -- (9,\y);  }
\end{tikzpicture}
\end{minipage}
\end{figure}

It is worth to note that most of interesting lattice structures 
(say, arising in crystalline modeling) inherit a number of spacial symmetries which 
allow, first, to classify and then simplify the computational schemes for each 
particular case of symmetry.
In this concern, we consider  the following classes of lattice topologies which can be 
efficiently treated by our tensor summation techniques: 
\begin{itemize}
\item[(A)] The target lattice ${\cal L}$ can be split into the union of several (few)
sub-lattices, ${\cal L}= \bigcup {\cal L}_q$, such that each sub-lattice ${\cal L}_q$
allows a 3D rectangular grid-structure.

\item[(B)] The 3D lattice points belong to the rectangular tensor grid  
in two spatial coordinates, but they violate  the tensor structure in the third variable
(say, parallelogram type grids).

\item[(C)] The 3D lattice points belong to the tensor grid in one of spatial 
coordinate, but they may violate the rectangular tensor structure in the 
remaining couple of variables.

\item[(D)] Defects in the target lattice are distributed over rectangular sub-lattices (clusters) 
represented on several coarser scales (multi-level tensor lattice sum).
\end{itemize}

\begin{figure}[htbp]
\centering
\includegraphics[width=6cm]{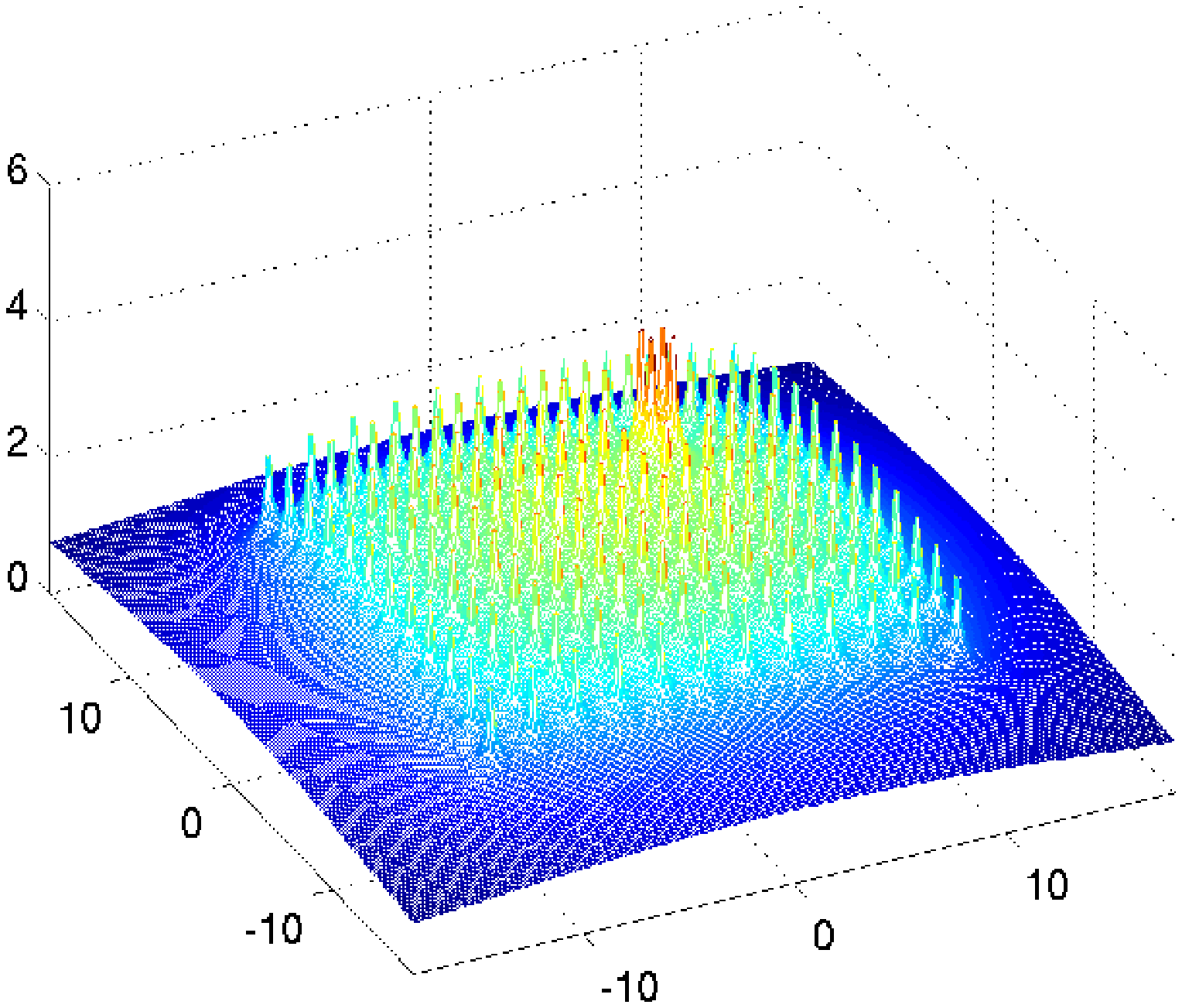}\quad
\includegraphics[width=5cm]{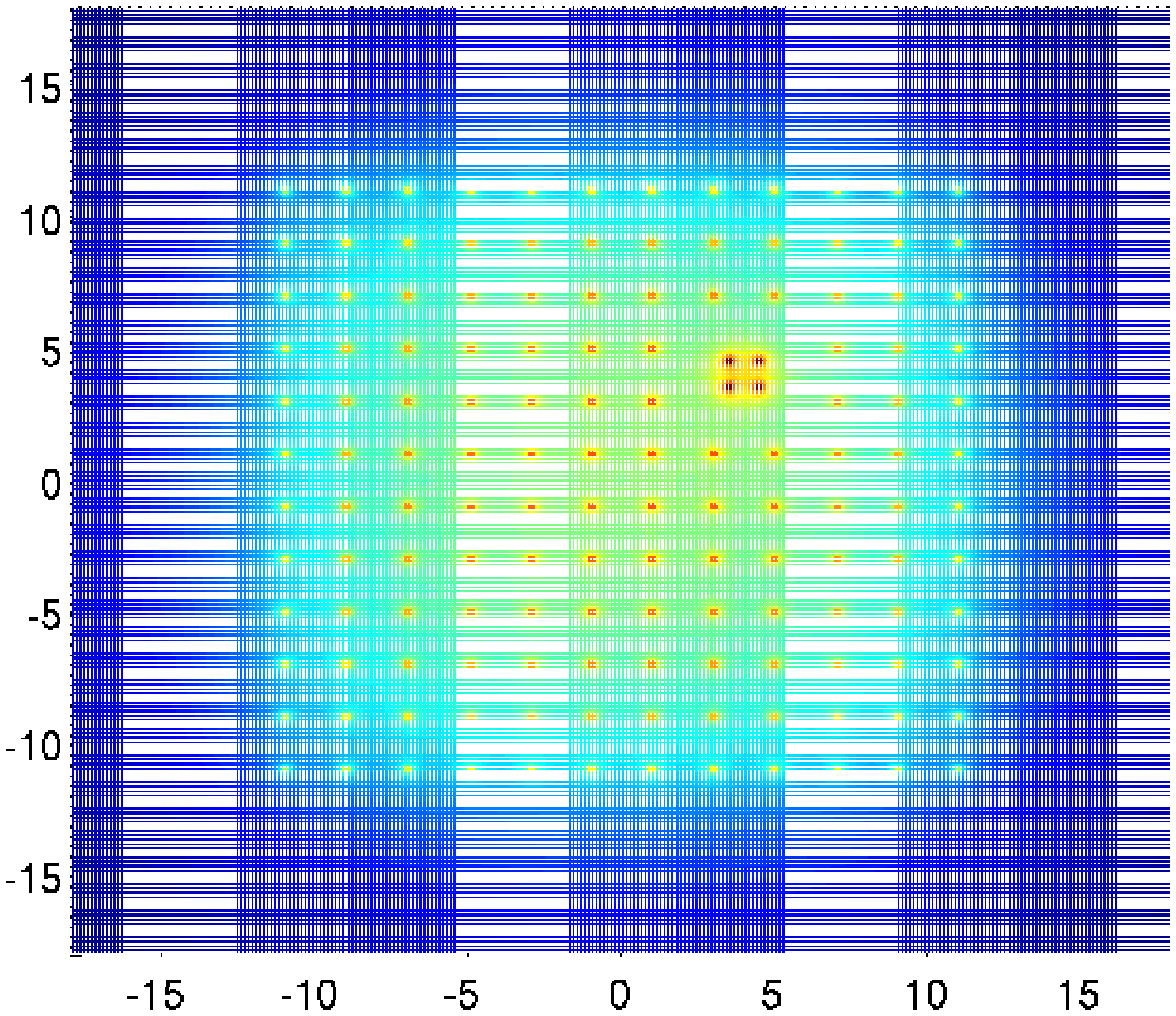}
\caption{Left: assembled canonical summation of 3D grid-based Newton potentials on a lattice 
$12 \times 12\times 1$, with an impurity, of size $2\times 2\times1$. 
Right: the vertical projection.}
\label{fig:Can_vac2_shift}  
\end{figure}

In case (A) the agglomerated tensor summation algorithms apply independently to each rectangular
sub-lattice ${\cal L}_q$, and then the target tensor is obtained as a direct 
sum of tensors associated with ${\cal L}_q$, supplemented by the subsequent
rank reduction procedure. The example of such a geometry is given by hexagonal grid presented in
Figure \ref{fig:Hexa_grid}, left ($(x,y)$ section of the $3$D lattice, that is 
rectangular in $z$-direction), 
which can be split into a union of two rectangular sub-lattices
${\cal L}_1$ (red) and ${\cal L}_2$ (blue). Another example is a lattice with $L$-shape
boundary. In this case the maximal rank does not exceed the multiple of $2$ and
the rank of a single reference Tucker tensor.

In case (B) the tensor summation applies only in two indices while a sum in the remaining 
third index is treated directly. This leads to the increase of directional rank 
proportionally to the 1D size of the lattice, $L$, 
hence requiring the subsequent rank reduction procedures described in 
\S\ref{ssec:Can2Tuck_rev} and \S\ref{ssec:TuckSum2Tuck}.
This may lead to the higher computational complexity of the summation.
An example of such a structure is the parallelogram-type lattice 
shown in Figure \ref{fig:para_grid}, right (orthogonal projection onto $(x,y)$ plane).

In case (C) the agglomerated summation can be performed only
in one index, supplemented by the direct summation in the remaining indices. 
The total rank then increases proportionally to $L^2$, 
making the subsequent rank optimization procedure indispensable. 
However, even in this worst case scenario the asymptotic complexity of the direct summation
shall be reduced on the order of magnitude in $L$ from $O(L^3)$ to  $O(L^2)$
due to the benefits of ''one-way'' tensor summation.

Case (D) can be treated by successive application of the canonical/Tucker tensor 
summation algorithm at several levels of defects location.
Figure \ref{fig:Can_vac2_shift} represent the result of assembled canonical summation 
of 3D grid-based Newton potentials on a lattice 
$12 \times 12\times 1$, with an impurity of size $2\times 2\times1$ that does not fit the location
of lattice points.  Since the impurity potentials are determined on the same fine 
$N_L \times N_L \times N_L$ representation grid, the difference  in inter-potential distances
does not influence on the numerical treatment of the defects. 
In the case of many non-regularly distributed defects the summation should be implemented 
in the Tucker format with the subsequent rank truncation. 

Figure \ref{fig:Vacance_L_O} (left) visualizes the result of assembled canonical summation of 3D 
grid-based Newton potentials on a lattice $24 \times 24\times 1$, with  regularly positioned  
$ 6\times 6\times 1$ vacancies (two-level lattice). Figure \ref{fig:Vacance_L_O} represents the result of
assembled canonical summation of the Newton potentials on $L$-shaped (left) 
and $O$-shaped (right) sub-lattices of the $24 \times 24\times 1$ lattice (two-level step-type geometry).
In all these cases the total tensor rank does not exceed the double rank of the single reference potential
since all vacancies are located on tensor sub-lattice of the target lattice.

\begin{figure}[htbp]
\centering
\includegraphics[width=4.6cm]{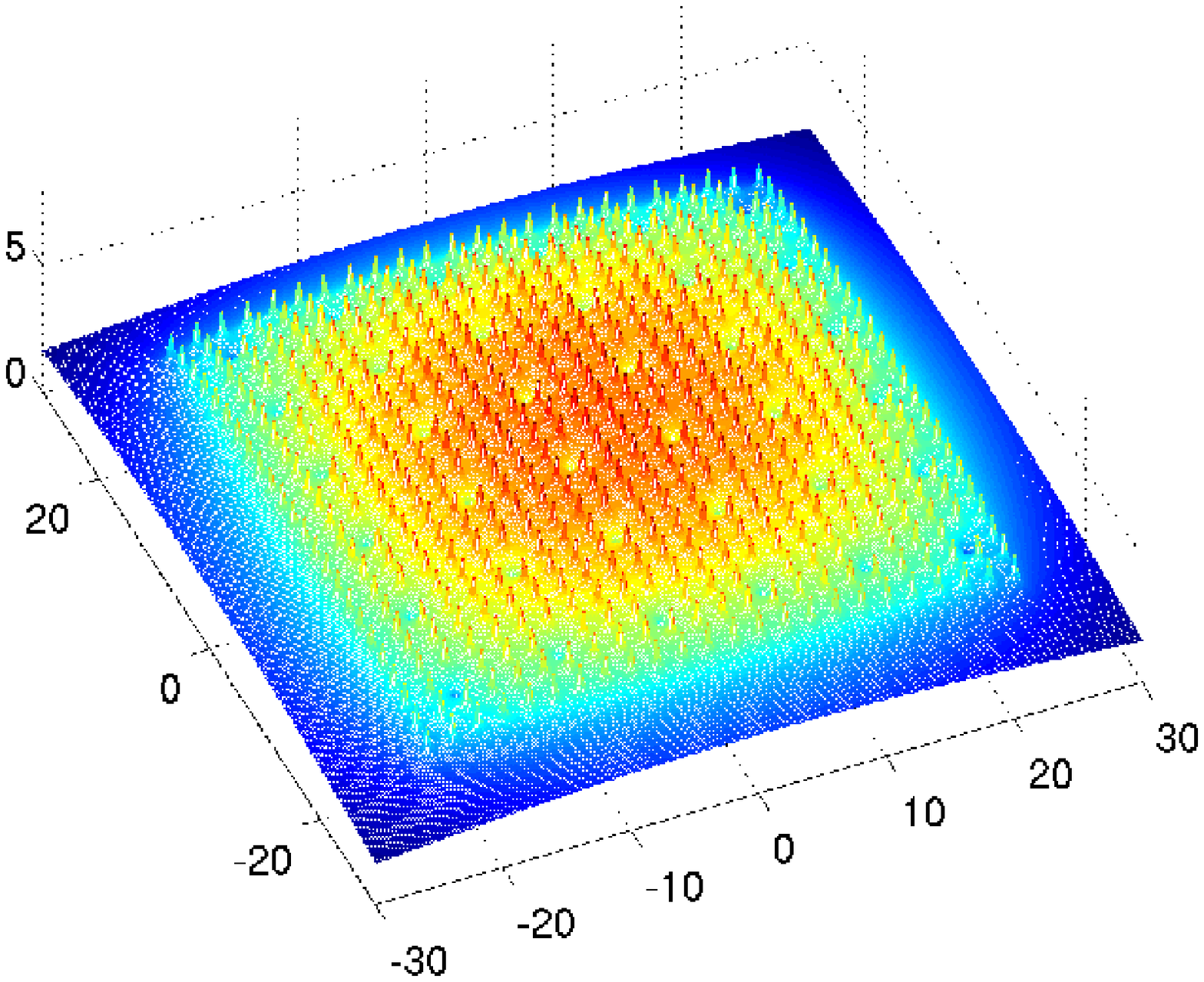}\quad
\includegraphics[width=4.6cm]{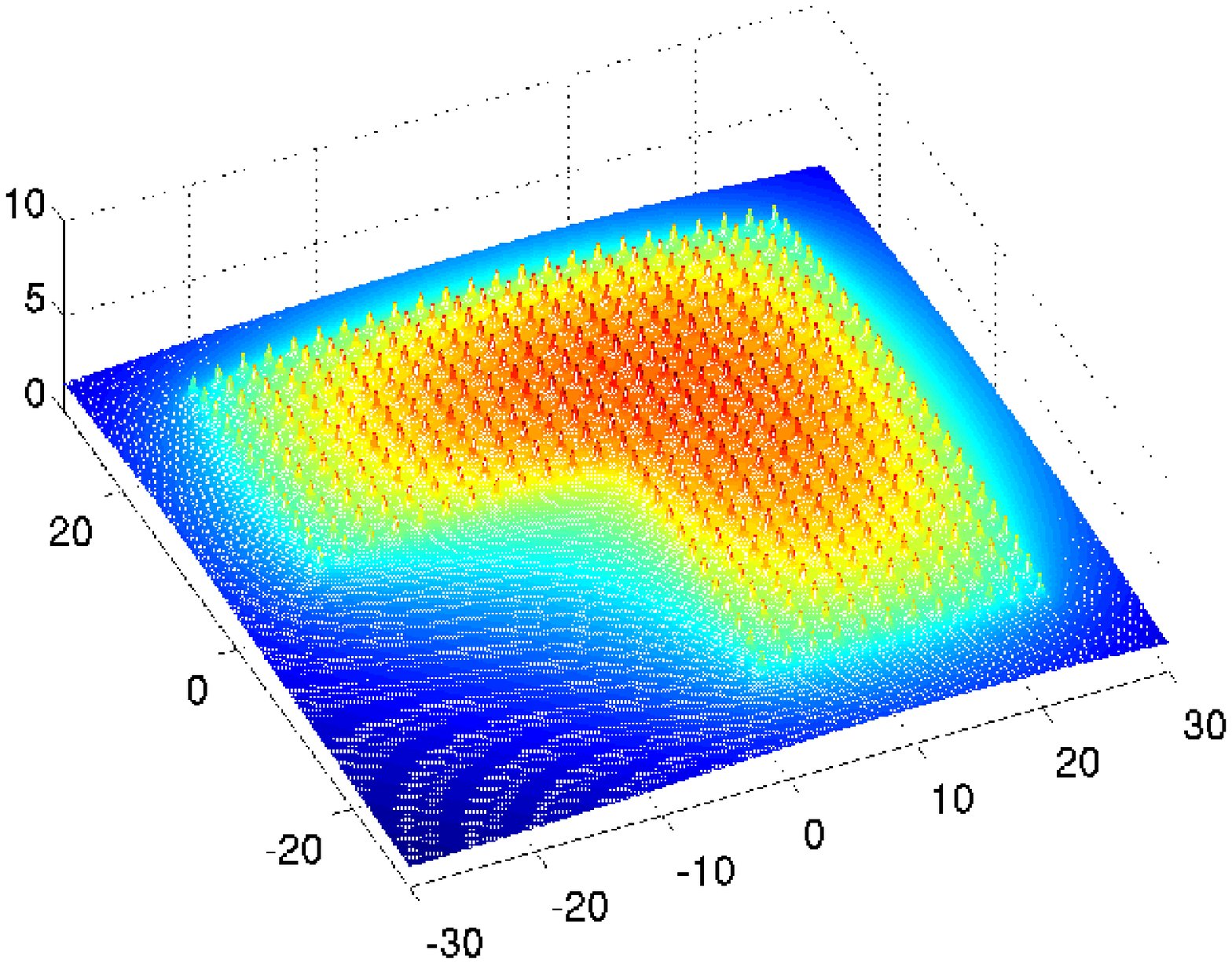}\quad
\includegraphics[width=4.6cm]{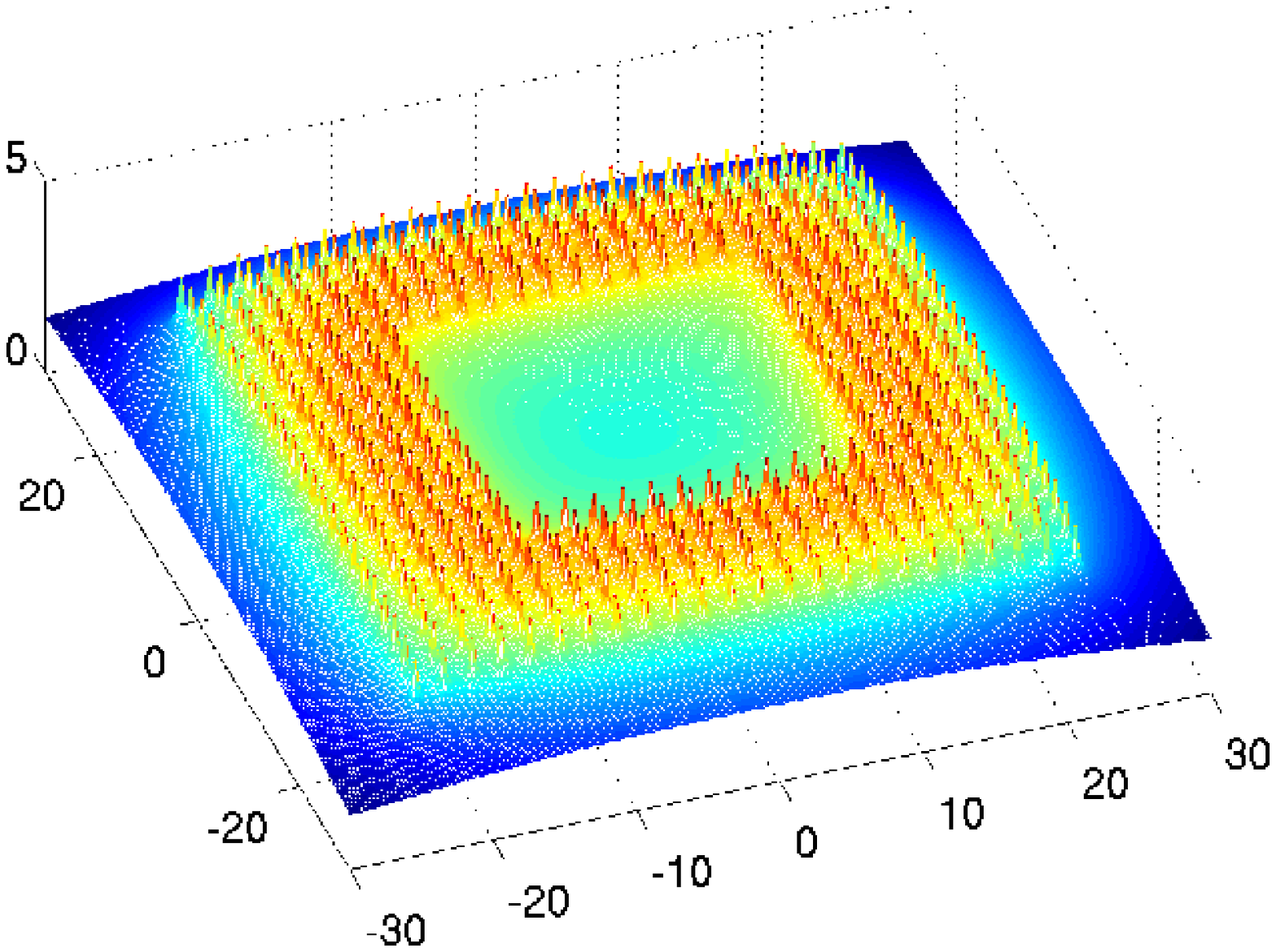}
\caption{Assembled summation of 3D grid-based Newton potentials in canonical format 
on a $24 \times 24\times 1$ lattice: (left)  regular $ 6\times 6\times 1$ vacancies, 
(middle) $L$-shaped geometry, (right) $O$-shaped sub-lattices.}
\label{fig:Vacance_L_O}  
\end{figure}

We summarize  that in all cases (A) - (D) classified  above
the tensor summation approach cab be gainfully applied.
The overall numerical cost may depend on the geometric structure 
and symmetries of the system under consideration since violation of the tensor-product 
rectangular structure of the lattice may lead to the increase in the Tucker/canonical rank.
This is clearly observed in the case of moderate number of defects distributed randomly. 
In all such cases the RHOSVD approximation combined with 
the ALS iteration serves for the robust rank reduction in the Tucker format.

\section{Conclusions}
\label{sec:Conclusion}

In this paper we presented the fast rank-structured tensor  method for the efficient grid-based
summation of long-range potentials on lattices with vacancies and defects, as well as
in the presence of non-rectangular geometries. It is shown that summation of potentials on
perturbed $L\times L \times L$  lattices by using the Tucker/canonical  tensor formats
can be performed in $O(L)$ operations that improves dramatically the cost $O(L^3)$ by the standard methods.

All computational 3D data are presented on the one common fine $N\times N \times N$ grid
by low-rank tensors in $\mathbb{R}^{N \times N \times N}$, that
allows the simultaneous approximation with guaranteed precision of all singular kernel
functions involved in the summation.
In case of unperturbed lattice, both the canonical and Tucker ranks of the resultant tensor sum
remains the same as for the individual reference potential.

Calculation of the potential sum on defected lattices  is performed in an algebraic
way, by using summation rules for tensors in the canonical or Tucker formats, which lead to
increase in the Tucker or canonical ranks of the resultant tensor. The rank truncation
for  the overall potential sum is based on the canonical-to-Tucker
or Tucker-sum-to-Tucker transform via the reduced HOSVD approximation.
The stability conditions for such kind of approximation have been analyzed. 

The presented approach yields enormous reduction in storage and computing time.
Numerical examples illustrate the rank bounds and asymptotic complexity of the
tensor summation method in both canonical and Tucker  data formats in the agreement
with theoretical predictions. Summation of millions of potentials on a finite 3D lattice
is performed in seconds in Matlab implementation.

This scheme can be applied to a number of potentials including the Newton, Slater, Yukawa,
Lennard-Jones, Buckingham and dipole-dipole kernel functions.
The assembled tensor summation approach is well suited for further applications in electronic
and molecular structure calculations of  large lattice-structured compounds,
see \cite{VeKhorCorePeriod:14}, as well as in various computational problems for many-particle systems. 
In particular, it is can be efficient for calculation of electronic properties of large finite 
crystalline systems like quantum dots, which are 
intermediate between bulk (periodic) systems and discrete molecules.

\section{Appendix: Canonical-to-Tucker approximation}
\label{sec:Appendix}

In Appendix we present the error estimate for the RHOSVD approximation by the
so-called Canonical-to-Tucker scheme \cite{KhKh3:08}.
Let us denote by ${\cal G}_{\ell}$ the so-called Grassman manifold
that is a factor space with respect to all possible rotations 
to the Stiefel manifold $ {\cal M}_{\ell} $ of orthogonal $n \times r_\ell$ matrices,
\begin{equation*}\label{Stiefel m}
{\cal M}_{\ell}:=
\{Y\in \mathbb{R}^{n \times r_\ell}:Y^TY=I_{r_\ell\times r_\ell} \},   
 \quad (\ell=1,...,d).
\end{equation*}
Denote by ${\cal T}_{{\bf r},{\bf n}}$ the set of rank-${\bf r}$ Tucker tensors.

\begin{theorem} \label{thm:C2Tucker}
(Canonical to Tucker approximation, \cite{KhKh3:08}). \\
(a) Let ${\bf A}= {\bf A}_{(R)}$ 
be given by (\ref{eqn:CP_form}). Then the minimization problem
\begin{equation}\label{Cost f1}
  {\bf A}\in \mathbb{V}_{\bf n}:
  \quad {\bf A}_{({\bf r})} =
  \operatorname{argmin}_{{\bf T} \in {\cal T}_{{\bf r},{\bf n}}}
 \|{\bf A} - {\bf T}\|_{\mathbb{V}_{\bf n}},
\end{equation}
is equivalent to the {\it dual maximization problem} over  the Grassman 
manifolds $ {\cal G}_{\ell} $,
\begin{equation}\label{GRQ Max CR2T}
  [{ W}^{(1)},...,{ W}^{(d)}] = 
\operatorname{argmax}_{{Y}^{(\ell)}\in {\cal G}_{\ell}  }
  \left\| \sum\limits_{\nu =1}^{R} \xi_{ \nu}
    \left({{ Y}^{(1)}}^T\, {\bf a}^{(1)}_\nu \right) \otimes ... 
\otimes \left( {{ Y}^{(d)}}^T\, {\bf a}^{(d)}_\nu \right)
    \right\|^2_{\mathbb{R}^{\bf r}}, 
\end{equation}
where ${Y}^{(\ell)}=[{y}^{(\ell)}_{1}...{y}^{(\ell)}_{r_\ell}]
\in \mathbb{R}^{n\times r_\ell} $ ($\ell=1,...,d$), and
$ {{Y}^{(\ell)}}^T\, {\bf a}^{(\ell)}_\nu \in \mathbb{R}^{r_\ell} $.

(b)
The compatibility condition 
$
r_\ell \leq rank(A^{(\ell)})\; \mbox{with}\; 
A^{(\ell)}=[{\bf a}^{(\ell)}_1 ... {\bf a}^{(\ell)}_R ]\in \mathbb{R}^{n\times R}
$
being the $\ell$-mode side-matrix, ensures the solvability of (\ref{GRQ Max CR2T}).
The maximizer is given by orthogonal
matrices ${ W}^{(\ell)}=[{\bf w}^{(\ell)}_{1} ... {\bf w}^{(\ell)}_{r_\ell}]\in
\mathbb{R}^{n\times r_\ell}  $, which  can be computed by
ALS Algorithm with the initial guess chosen as
the reduced HOSVD approximation of ${\bf A}$ given by ${\bf A}_{({\bf r})}^0$, see Definition \ref{def:RHOSVD}.


(c) Precomputed matrices ${ W}^{(\ell)}$,
 the minimizer in (\ref{Cost f1}) is then calculated by the orthogonal projection
\[
{\bf A}_{({\bf r})}=\sum\limits_{{\bf k}={\bf 1}}^{\bf r}  \mu_{\bf k}
   {\bf w}^{(1)}_{k_1}\otimes \cdots \otimes {\bf w}^{(d)}_{k_d}, 
 \quad \mu_{\bf k}= \langle {\bf w}^{(1)}_{k_1}\otimes \cdots \otimes {\bf w}^{(d)}_{k_d}
 , {\bf A} \rangle,
\]
where the core tensor $\boldsymbol{\mu} =[\mu_{\bf k} ]$ can be represented
in the rank-$R$ canonical format 
\begin{equation*}\label{C-core}
\boldsymbol{\mu}=\sum\limits_{\nu =1}^{R} \xi_{ \nu}({{ W}^{(1)}}^T\,
{\bf a}^{(1)}_\nu) \otimes \cdots \otimes ({{W}^{(d)}}^T\, {\bf a}^{(d)}_\nu).
\end{equation*}

(d) Let 
$\sigma_{\ell,1}\geq\sigma_{\ell,2} ...  \geq\sigma_{\ell,\min(n,R)}$
 be the  singular values of the $\ell$-mode side-matrix
 $A^{(\ell)}\in \mathbb{R}^{n\times R}$ ($\ell=1,...,d$).
 Then the reduced HOSVD approximation  ${\bf A}_{({\bf r})}^0$ 
exhibits  the error estimate
\begin{equation}\label{eqn:error_bound_CtoT}
\|{\bf A}- {\bf A}_{({\bf r})}^0\| \leq \|\boldsymbol{\xi}\| \sum\limits_{\ell=1}^d
(\sum\limits_{k=r_\ell +1}^{\min(n,R)} \sigma_{\ell,k}^2)^{1/2},\quad \mbox{where}\quad
\|\boldsymbol{\xi}\|^2 = \sum\limits_{\nu =1}^{R} \xi_{\nu}^2.
\end{equation}
\end{theorem}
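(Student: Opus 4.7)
The plan is to prove the four parts in sequence, since (b)--(d) all build on the variational reformulation established in (a). For part (a), I would fix arbitrary orthogonal frames $Y^{(\ell)} \in {\cal M}_{\ell}$ and observe that among rank-$\bf r$ Tucker tensors with those frames, the closest to ${\bf A}$ in Frobenius norm is the orthogonal projection $P_Y {\bf A}$ onto $\bigotimes_\ell \mathrm{span}(Y^{(\ell)})$. By Pythagoras, $\|{\bf A} - P_Y {\bf A}\|^2 = \|{\bf A}\|^2 - \|P_Y {\bf A}\|^2$, so the primal minimization over ${\cal T}_{{\bf r},{\bf n}}$ reduces to maximizing $\|P_Y {\bf A}\|^2$ over the Stiefel product. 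Plugging in the canonical representation (\ref{eqn:CP_form}) and using separability of the projector gives exactly the objective in (\ref{GRQ Max CR2T}); since its value depends only on the column spans of the $Y^{(\ell)}$, the problem descends to the product of Grassmann manifolds $\mathcal{G}_\ell$.

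For part (b), existence of a maximizer follows from compactness of the product of Grassmann manifolds together with continuity of the objective, and the compatibility condition $r_\ell \leq \mathrm{rank}(A^{(\ell)})$ rules out degenerate projections by ensuring that $r_\ell$-dimensional subspaces inside the column span of $A^{(\ell)}$ are available. Each ALS subproblem fixes $d-1$ frames and maximizes the resulting quadratic form $\mathrm{tr}({Y^{(\ell)}}^T B_\ell Y^{(\ell)})$ over $Y^{(\ell)}$ on the Stiefel manifold; the optimizer is the top-$r_\ell$ eigenvectors of the symmetric matrix $B_\ell$ obtained by contracting $\|P_Y {\bf A}\|^2$ in the other modes, yielding monotone ascent and closed-form updates. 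For part (c), substituting the maximizers $W^{(\ell)}$ into the projection formula $\mu_{\bf k} = \langle {\bf A}, {\bf w}^{(1)}_{k_1} \otimes \cdots \otimes {\bf w}^{(d)}_{k_d}\rangle$ and inserting the canonical form of ${\bf A}$, together with the separability $\langle \bigotimes_\ell {\bf a}^{(\ell)}_\nu, \bigotimes_\ell {\bf w}^{(\ell)}_{k_\ell}\rangle = \prod_\ell \langle {\bf a}^{(\ell)}_\nu, {\bf w}^{(\ell)}_{k_\ell}\rangle$, immediately yields the stated rank-$R$ canonical form of the core tensor $\boldsymbol{\mu}$.

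Part (d) is the core of the argument. I would represent ${\bf A}$ as in (\ref{eqn:CP_form_ContrpTuck}) with diagonal core $\boldsymbol{\xi}$ and let $\pi^{(\ell)} := Z_0^{(\ell)} {Z_0^{(\ell)}}^T$ denote the rank-$r_\ell$ projection used by RHOSVD, so that ${\bf A}_{({\bf r})}^0 = \pi^{(d)} \cdots \pi^{(1)} {\bf A}$. Using the telescoping identity
\begin{equation*}
{\bf A} - \pi^{(d)} \cdots \pi^{(1)} {\bf A} = \sum_{\ell=1}^d \pi^{(d)} \cdots \pi^{(\ell+1)} (I - \pi^{(\ell)}) \pi^{(\ell-1)} \cdots \pi^{(1)} {\bf A}
\end{equation*}
together with the fact that projections are contractions, it suffices to bound $\|(I-\pi^{(\ell)}) \pi^{(\ell-1)} \cdots \pi^{(1)} {\bf A}\|$ for each $\ell$. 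The partially projected tensor retains a canonical representation with vectors $\pi^{(m)} {\bf a}^{(m)}_\nu$ for $m < \ell$, the original ${\bf a}^{(m)}_\nu$ otherwise, and weights $\xi_\nu$; its $\ell$-mode matricization factors as $A^{(\ell)} X_\ell$, where $X_\ell$ is a Khatri-Rao-type matrix whose $\nu$th column has Frobenius norm $|\xi_\nu| \prod_{m \neq \ell} \|\pi^{(m)} {\bf a}^{(m)}_\nu\| \leq |\xi_\nu|$. Hence $\|X_\ell\|_{op} \leq \|X_\ell\|_F \leq \|\boldsymbol{\xi}\|$, and
\begin{equation*}
\|(I - \pi^{(\ell)}) A^{(\ell)} X_\ell\|_F \leq \|(I - \pi^{(\ell)}) A^{(\ell)}\|_F \, \|X_\ell\|_{op} \leq \|\boldsymbol{\xi}\| \Bigl(\sum_{k = r_\ell + 1}^{\min(n,R)} \sigma_{\ell,k}^2\Bigr)^{1/2};
\end{equation*}
summing over $\ell$ yields (\ref{eqn:error_bound_CtoT}). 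The main obstacle will be to verify that the factor $\|\boldsymbol{\xi}\|$ survives the telescoping, i.e., that applying the earlier-mode projections $\pi^{(1)}, \ldots, \pi^{(\ell-1)}$ does not enlarge the operator norm of $X_\ell$; the key observation is simply $\|\pi^{(m)} {\bf a}^{(m)}_\nu\| \leq \|{\bf a}^{(m)}_\nu\| = 1$, which preserves the columnwise Frobenius bound at every intermediate step.
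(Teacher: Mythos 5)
The paper itself contains no proof of this theorem (it is quoted verbatim from \cite{KhKh3:08}), and your argument correctly reconstructs the standard proof of that reference along essentially the same lines: the Pythagoras reduction of the primal minimization to the dual Grassmann maximization for (a), the top-$r_\ell$ eigenvector ALS subproblems for (b), separability of inner products of rank-one tensors for (c), and the mode-wise telescoping combined with the bound $\|MN\|_F\leq \|M\|_F\|N\|_{op}$ and the Khatri--Rao factor controlled through $\|\pi^{(m)}{\bf a}^{(m)}_\nu\|\leq \|{\bf a}^{(m)}_\nu\|=1$ for (d). The only blemish is a harmless orientation slip in (d): with the factorization written as $A^{(\ell)}X_\ell$, the bound $|\xi_\nu|\prod_{m\neq\ell}\|\pi^{(m)}{\bf a}^{(m)}_\nu\|\leq |\xi_\nu|$ applies to the \emph{rows} of $X_\ell$ (equivalently the columns of $X_\ell^T$), which does not affect the conclusion $\|X_\ell\|_{op}\leq\|X_\ell\|_F\leq\|\boldsymbol{\xi}\|$.
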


\begin{footnotesize}

\end{footnotesize}


\begin{thebibliography}{99}
\bibitem{BeHaKh:08} {C. Bertoglio, and B.N. Khoromskij}.
   \emph{Low-rank quadrature-based tensor approximation of the Galerkin 
projected Newton/Yukawa kernels.} Comp. Phys. Communications,  183(4) (2012) 904--912.

\bibitem {Bloch:1925}
Bloch, Andr{\'e}, "Les theoremes de M. Valiron sur les fonctions entieres et la
theorie de l'uniformisation".
Annales de la faculte des sciences de l'universite de Toulose 17 (3): 1-22 (1925).
ISSN 0240-2963.

\bibitem{Boys:56} Boys, S. F., Cook, G. B., Reeves, C. M. and Shavitt, I. (1956). 
\emph{Automatic Fundamental Calculations of Molecular Structure}. Nature, 178: 1207-1209.

\bibitem{Braess:BookApTh} D. Braess. \emph{Nonlinear approximation theory}. 
Springer-Verlag, Berlin, 1986.

\bibitem{Braess:95} D. Braess. \emph{Asymptotics for the Approximation of Wave Functions by
Exponential-Sums}. J. Approx. Theory, 83: 93-103, (1995).

\bibitem{CanEhrMad:2012} E. Canc{\'e}s, V. Ehrlacher, and Y. Maday.
\emph{Periodic Schr\"odinger operator with local defects and spectral pollution.}
SIAM J. Numer. Anal. v. 50, No. 6, pp. 3016-3035.

\bibitem{CanLe:2013} E. Canc{\'e}s  and C. Le Bris.
\emph{Mathematical modeling of point defects in materials science}.
Math. Methods Models Appl. Sci. 23 (2013) 1795-1859.

\bibitem{DYP:93} T. Darten, D. York  and L. Pedersen. 
\emph{Particle mesh Ewald: An $O(N\log N)$ method for Ewald sums in large systems.} 
J. Chem. Phys., 98, 10089-10091, 1993.


\bibitem{DMV-SIAM2:00}L.~{De Lathauwer}, B.~{De Moor}, J.~Vandewalle.
  \emph{A multilinear singular value decomposition.}
  SIAM J. Matrix Anal. Appl., 21 (2000) 1253-1278.


\bibitem{DesHolm:98} M. Deserno and C. Holm. 
\emph{How to mesh up Ewald sums. I. A theoretical and numerical
comparison of various particle mesh routines.} 
J. Chem. Phys., 109(18): 7678-7693, 1998.



\bibitem{Dolg_PhD:14} S.V. Dolgov. \emph{Tensor-product methods in numerical simulation
of high-dimensional dynamical problems.}, University of Leipzig, Dissertaion, 2014.
http://nbn-resolving.de/urn:nbn:de:bsz:15-qucosa-151129


%

\bibitem{DolKhLitMat:14} Sergey Dolgov, Boris N. Khoromskij, Alexander Litvinenko, and  Hermann G. Matthies.
\emph{Computation of the Response Surface in the Tensor Train data format.}
E-preprint arXiv:1406.2816, 2014.

\bibitem{CRYSTAL:2000} R. Dovesi, R. Orlando, C. Roetti, C. Pisani, and
V.R. Sauders. \emph{The Periodic Hartree-Fock Method and its Implementation
in the CRYSTAL Code.} Phys. Stat. Sol. (b) {\bf 217}, 63 (2000).

\bibitem{OrtnEhrl:13} V. Ehrlacher, C. Ortner, and A. V. Shapeev. 
\emph{Analysis of boundary conditions for crystal defect atomistic simulations.} 
 e-prints ArXiv:1306.5334, 2013.

\bibitem {Ewald:27} Ewald P.P. \emph{Die Berechnung optische und elektrostatischer 
Gitterpotentiale.} Ann. Phys {\bf 64}, 253 (1921).


\bibitem{GaHaKh3:05}
I.P. Gavrilyuk, W. Hackbusch and B.N. Khoromskij. {\it  
  Data-Sparse Approximation to a Class of  Operator-Valued Functions. }
Math. Comp. {\bf 74} (2005), 681-708.


\bibitem{GraKresTo:13} L. Grasedyck, D. Kressner and C. Tobler.
\emph{A literature survey of low-rank tensor approximation techniques.}
arXiv:1302.7121v1, 2013.



\bibitem{RochGreen:87} L. Greengard and V. Rochlin. 
\emph{A fast algorithm for particle simulations}.
J. Comp. Phys. 73  (1987) 325.


\bibitem {HaKhtens:04I}   W.~Hackbusch and B.N.~Khoromskij.
  \emph{Low-rank Kronecker product approximation to multi-dimensional 
nonlocal operators. Part I. Separable approximation of multi-variate functions.}
Computing {\bf 76} (2006), 177-202.

\bibitem {HaSchneid_Surv:15} W. Hackbusch, and R. Schneider.
\emph{Tensor Spaces and Hierarchical Tensor Representations.}
In: Lecture Notes in Computer Science and Engineering, 102, 
S. Dahlke, W. Dahmen, et al. eds., p. 237-262, Springer, 2014.

\bibitem{HJO:2000} T. Helgaker, P. J{\o}rgensen, and J. Olsen.
\emph{Molecular Electronic-Structure Theory}. Wiley, New York, 1999.

\bibitem {Hune_Ewald:99} Philippe H. H\"unenberger. 
\emph{Lattice-sum methods for computing electrostatic interactions in molecular simulations.}
CP492, L.R. Pratt and G. Hummer, eds., 1999, American Institute of Physics, 1-56396-906-8/99.

%



\bibitem {VeKh_Diss:10} Venera Khoromskaia.
\emph{Numerical Solution of the Hartree-Fock Equation by Multilevel Tensor-structured methods.}
Dissertation, TU Berlin, 2010.\\
http://opus4.kobv.de/opus4-tuberlin/frontdoor/index/index/docId/2780

\bibitem{KhorVBAndrae:11} V. Khoromskaia, D. Andrae, and B.N. Khoromskij.
\emph{Fast and accurate 3D tensor calculation of the Fock operator in a general basis}.
Comp. Phys. Communications, 183 (2012) 2392-2404. 

\bibitem{VKH_solver:13} V. Khoromskaia. 
\emph{Black-box Hartree-Fock solver by tensor numerical methods}. 
Comp. Meth. in Applied Math., Vol. 14 (2014) No.1, pp. 89-111. 




\bibitem{VeBoKh:Ewald:14} V. Khoromskaia and B. N. Khoromskij.
\emph{Grid-based lattice summation of electrostatic potentials by assembled rank-structured 
tensor approximation.} Comp. Phys. Communications, 185 (2014), pp. 3162-3174. 

\bibitem{VeKhorCorePeriod:14} V. Khoromskaia, and B.N. Khoromskij.
\emph{Tensor Approach to Linearized Hartree-Fock Equation  for Lattice-type and Periodic Systems.}
E-preprint arXiv:1408.3839, 2014 (submitted).

\bibitem {Khor:06} B.N. Khoromskij, \emph{Structured Rank-$(r_1,...,r_d)$
Decomposition of   Function-related Tensors in $\mathbb{R}^d $.}
Comp. Meth. in Applied Math., {\bf 6} (2006), 2, 194-220.

\bibitem {Khor_CVS:07} B.N. Khoromskij. \emph{On Tensor Approximation of
Green Iterations for Kohn-Sham Equations.}
Computing and Visualization in Sci., {\bf 11}: 259-271 (2008).
 
%

\bibitem{KhQuant:09} B.N. Khoromskij.
\emph{$O(d\log N)$-Quantics Approximation of $N$-$d$ Tensors in High-Dimensional 
Numerical Modeling.} Constructive Approx. {\bf 34}  (2011) 257--280.
(Preprint 55/2009 MPI MiS, Leipzig 2009.)

\bibitem {KhKh:06}  B. N. Khoromskij and V. Khoromskaia.
  \emph{Low Rank Tucker Tensor Approximation to the Classical Potentials.}
Central European J. of Math., {\bf 5}(3) 2007, 1-28.

\bibitem {KhKh3:08}B.N. Khoromskij and V. Khoromskaia. 
\emph{Multigrid tensor approximation of function related multi-dimensional arrays.} 
SIAM J.  Sci.  Comp. 31(4) (2009) 3002-3026.




\bibitem{KhorSurv:14} Boris N. Khoromskij.
\emph{Tensor Numerical Methods for High-dimensional PDEs: Basic Theory and
  Initial Applications}. E-preprint arXiv:1408.4053, 2014. ESAIM: Proceedings 2014 (to appear).

\bibitem{Kolda:01} T. Kolda. \emph{Orthogonal tensor decompositions.}
SIAM J. Matrix Anal. Appl. 23 (2001) 243-255. 

\bibitem{Kolda}  T.G. Kolda and B.W. Bader.
\emph{Tensor Decompositions and Applications.} SIAM Rev. 51(3) (2009) 455--500.

\bibitem {KuScuser:04} K.N. Kudin, and G.E. Scuseria,
\emph{Revisiting infinite lattice sums with the periodic Fast Multipole Method}, 
J. Chem. Phys. 121, 2886-2890 (2004).


%

\bibitem{LSJ:2010} S. A. Losilla, D. Sundholm, J. Juselius.
\emph{The direct approach to gravitation and electrostatics method for periodic systems}.
J. Chem. Phys. 132 (2)  (2010)  024102.

\bibitem{LuOrtnerVK:2013} M. Luskin, C. Ortner, and B. Van Koten. 
\emph{Formulation and optimization of the energy-based blended quasicontinuum method.} 
Comput. Methods Appl. Mech. Engrg., 253, 2013.


\bibitem {Osel_TT:11} I.V. Oseledets.
  \emph{Tensor-train decomposition.} SIAM J. Sci. Comp., 33(5), 2011, pp. 2295-2317.

\bibitem {Osel_CMAM:11} I.V. Oseledets.
\emph{DMRG approach to fast linear algebra in TT format}. CMAM, 11, 3, 382-393, 2011. 
  


 \bibitem{CRYSCOR:12} C. Pisani, M. Sch{\"u}tz, S. Casassa, D. Usvyat,  L. Maschio, 
 M. Lorenz, and A. Erba.  
 \emph{CRYSCOR: a program for the post-Hartree-Fock treatment of periodic systems.} 
 Phys. Chem. Chem. Phys., 2012, {\bf 14}, 7615-7628.

\bibitem{PolGlo:96} E.L. Pollock, and Jim Glosli. 
\emph{Comments on $P^3M$, $FMM$, and the Ewald method for large periodic Coulombic systems.}
Computer Phys. Communication {\bf 95} (1996), 93-110.


\bibitem{SaDoWeKu:14} D. V. Savostyanov, S. V. Dolgov, J. M. Werner and I. Kuprov.
\emph{Exact NMP simulation of protein-size spin systems using tensor train formalism}.
Phys. Rev. B 90, 085139, 2014. 

\bibitem{Scholl:11} U. Schollw\"ock. \emph{The density-matrix renormalization group in the age of
matrix product states}, Ann.Phys. 326 (1) (2011) 96-192.

\bibitem {Stenger} F. Stenger. 
\emph{Numerical methods based on Sinc and analytic functions.} Springer-Verlag, 1993.

\bibitem {TouBo_Ewald:96} A.Y. Toukmaji, and J. Board Jr. 
\emph{Ewald summation techniques in perspective: a survey.} 
Computer Phys. Communication {\bf 95} (1996), 73-92.



\bibitem{VolUsSchDePau:11} Elena Voloshina, Denis Usvyat, Martin Sch{\"u}tz, 
 Yuriy Dedkov and Beate Paulus.
 \emph{On the physisorption of water on graphene: a CCSD(T) study.}
 Phys. Chem. Chem. Phys., 2011, 13, 12041-12047.




\bibitem {Zeidler:03} E. Zeidler.
\emph{Oxford User's Guide to Mathematics.} Oxford University Press, 2003.

\end{thebibliography}
\end{document}